\documentclass[11 pt]{article}
\usepackage{amssymb}
\usepackage{amsmath}
\usepackage{theorem}
\usepackage{epsfig}
\usepackage{verbatim}
\usepackage{graphicx}
\usepackage{subfigure}
\usepackage{cancel}
\usepackage{epstopdf}
\usepackage{a4wide}
\usepackage{pdflscape}
\usepackage{color}
\usepackage{enumerate}


\addtolength{\oddsidemargin}{-.0in}
\addtolength{\evensidemargin}{-.0in}
\addtolength{\textwidth}{.0in}

\def\ep{\epsilon}

\def\Im{{\rm{Im}}\,}

\DeclareMathOperator{\sgn}{sgn}


\newcommand{\R}{{\mathbb{R}}}
\newcommand{\C}{{\mathbb{C}}}
\newcommand{\Z}{{\mathbb{Z}}}
\newcommand{\N}{{\mathbb{N}}}

\newcommand{\T}{{\mathbb{T}}}
\newcommand{\bx}{\mathbf{x}}
\newcommand{\by}{\mathbf{y}}
\newcommand{\bu}{\mathbf{u}}
\newcommand{\be}{\mathbf{e}}
\newcommand{\pa}{\partial}

\newtheorem{df}{Definition}

\newtheorem{lem}{Lemma}

\newtheorem{rem}{Remark}

\newtheorem{thm}{Theorem}

\newenvironment{proof}{\begin{trivlist} \item[] {\em Proof:}}{\hfill $\Box$
                       \end{trivlist}}

\title{The lifespan of classical solutions for the inviscid  Surface Quasi-geostrophic equation.}

\author{\'Angel Castro, Diego C\'ordoba and Fan Zheng 
}


\begin{document}

\maketitle

\begin{abstract}
We consider classical solutions of the inviscid Surface Quasi-geostrophic equation that are
a small perturbation  $\epsilon$ from a radial stationary solution $\theta=|x|$. We use a modified energy method to prove the existence time of classical solutions from $\frac{1}{\epsilon}$ to a time scale of $\frac{1}{\epsilon^4}$. Moreover, by perturbing in a suitable direction we construct global smooth solutions, via bifurcation, that rotate uniformly in time and space.
\end{abstract}

\section{Introduction}
The Surface Quasi-geostrophic equation (SQG) is an active scalar equation
\begin{align*}
\pa_t \theta +\bu\cdot\nabla\theta =0
\end{align*}
for $\theta : \R^2\times\R \to \R$, where the incompressible velocity is related to $\theta$ by $$\bu(\bx,t)=-\nabla^\perp \Lambda^{-1} \theta(\bx,t)$$ and the non-local operator is defined by  $\Lambda=\left(-\Delta\right)^\frac{1}{2}$.

This equation has a geophysical origin (see \cite{Constantin-Majda-Tabak:formation-fronts-qg,Held-Pierrehumbert-Garner-Swanson:sqg-dynamics,Pedlosky:geophysical,Majda-Bertozzi:vorticity-incompressible-flow} for more details) and its mathematical analysis was initially studied by P. Constantin, A. Majda and E. Tabak in \cite{Constantin-Majda-Tabak:formation-fronts-qg} motivated because its similarity with the 3D Euler equations and as a candidate model for a finite time front formation (see also \cite{Const94}).

The local well-posedness of solutions in $H^s$ of the SQG equation is well understood. The standard energy estimates for SQG gives $$\frac{d}{dt}||\theta||_{H^s}^2\leq \left(||\nabla u||_{L^{\infty}}+||\nabla\theta||_{L^\infty}\right) ||\theta||_{H^s}^2.$$ Since the velocity  $u$ is a singular integral operator with respect to the active scalar $\theta$   we can close the a priori estimates for $s> 2$ which yields a local time of existence.
The goal of this paper is to construct solutions of SQG that extend  in time the existence of classical solutions of initial size $\epsilon$  beyond the hyperbolic existence time $O\left(\frac{1}{\epsilon}\right)$.


\subsection{Previous results on long time existence of smooth solutions of SQG}

However very few results are known about the global regularity or long time behaviour of smooth solutions. Global existence of weak solutions in $L^2$ was shown by Resnik in \cite{Resnick:phd-thesis-sqg-chicago} (see also \cite{Marchand:existence-regularity-weak-solutions-sqg, TSV19} for a lower regularity class and \cite{ConstNguy18} in the case of a bounded domain). For higher regularity solutions Dritschel \cite{Dritschel:exact-rotating-solution-sqg} constructed global solutions that have $C^{\frac12}$ regularity. Later, in  \cite{Castro-Cordoba-GomezSerrano:global-smooth-solutions-sqg}, the existence of rotating solutions is proven by Castro et al. with $C^4-$regularty and $3-$fold symmetry, and in \cite{GS19}  Gravejat and Smets showed the existence of travelling waves. In both  \cite{Castro-Cordoba-GomezSerrano:global-smooth-solutions-sqg} and \cite{GS19} the solutions are smooth and have compactly supported $\theta$. In the opposite direction Kiselev and Nazarov, in \cite{KN12}, proved arbitrary bounded growth of high Sobolev norms on finite time intervals in the case of periodic solutions and Friedlander and Shvydkoy
\cite{Friedlander-Shvydkoy:unstable-spectrum-sqg} showed the existence of unstable eigenvalues of the spectrum.

It is an open problem whether the SQG equation, from a smooth initial data, develops finite time singularities or not. Numerical simulations suggested, see \cite{Constantin-Majda-Tabak:formation-fronts-qg}, the possible formation of singularties  with a hyperpolic saddle scenario which later Ohkitani and Yamada \cite{Ohkitani-Yamada:inviscid-limit-sqg} and Constantin et al \cite{Constantin-Nie-Schorghofer:nonsingular-sqg-flow} suggested that the growth was double exponential.  C\'ordoba \cite{Cordoba:nonexistence-hyperbolic-blowup-qg} ruled out a blow-up for this scenario  and bounded the growth by a quadruple exponential. Which was further improved by C\'ordoba and Fefferman \cite{Cordoba-Fefferman:growth-solutions-qg-2d-euler} to a double exponential. Many years later, with bigger computational power and improved algorithms  Constantin et al. \cite{Constantin-Lai-Sharma-Tseng-Wu:new-numerics-sqg},  showed no evidence of the existence of singularities under the same hyperbolic scenario. Moreover,  they observed the depletion of the hyperbolic saddle past the previously computed times.

\subsection{Radially homogenous solutions and main results}

The aim of this paper is the study of the time of existence of certain smooth solutions which are small perturbations of a stationary radial solution $\theta = |\bx|$. These solutions will have a m-fold symmetry (with some $m \geq 3$) of the form $$\theta(\bx,t)=C|\bx|+|\bx|G(\alpha,t),$$ where  $\alpha=\arg(\bx)$  and $G$ is a $2\pi-$periodic function with the symmetry assumption $$G(\alpha,t)=\sum_{|n|\geq 1} G_{mn}(t)e^{i m n  \alpha}.$$

These unbounded solutions were studied by T. Elgindi and I-J. Jeong in \cite{EJ20}, in the case $C=0$. They prove local well-posedness for $G(\alpha,0) \in C^{k,\alpha}$ with $k\geq 0$ and $0<\alpha<1$.

 The radially homogeneous structure of these solutions allows us to obtain a 1D equation for $G$ in the same spirit as in \cite{CaCo10} (see also \cite{EJ20}) where the solutions have the form $$\theta(x_1,x_2,t)= x_2g(x_1,t).$$  Let $f(\alpha,t)=G(\alpha-2ct,t)$ where $c$ is a certain constant. Then $f$ satisfies the following equation
\begin{equation*}
\pa_t f+C\pa_\alpha S f =2Sf\pa_\alpha f -f\pa_\alpha Sf,
\end{equation*}
with
\begin{align*}
&Sf(\alpha)=\int_{0}^{2\pi}S(\alpha-\beta)f(\beta)d\beta,\\
&S(\alpha)=-\frac{1}{8\pi}(1+3\cos(2\alpha))\log(1-\cos(\alpha)).
\end{align*}

It was shown in \cite{EJ20} that the possible existence of finite time singularities for this 1D model with $C=0$ leads to a
singularity formation in the class of Lipschitz solutions with compact support to the SQG equation.

The motivation of our paper is to study the lifespan of these radially homogeneous solutions for $C\neq 0$ and  for a small perturbation of the stationary radial solution $\theta=|\bx|$ $(G=0)$. Our first main result shows that an $\epsilon$ perturbation of $G=0$ gives the existence of a solution of the following equation
\begin{equation}\label{f-eqn}
\pa_t f+\pa_\alpha S f =2Sf\pa_\alpha f -f\pa_\alpha Sf,
\end{equation}
  for a time $T\sim \frac{1}{\ep^4}$. We emphasis that standard methods yield local existence for \eqref{f-eqn} for times $T\sim \frac{1}{\ep}.$

\begin{thm}\label{longexistence}
There is $\ep_0 > 0$ such that if $s \ge 16$, $m\geq 3$,  $f_0 \in H^s\left(\mathbb{T}\right)$ with zero mean and $m-$fold symmetry, i.e., $$f_0\left(\alpha+\frac{2\pi}{m}\right)=f_0\left(\alpha\right)\quad \forall \alpha \in \mathbb{T}$$   and $\|f_0\|_{H^{16}\left(\mathbb{T}\right)} = \ep \le \ep_0$, then there are $T \approx \ep^{-4}$ and a solution $f \in C^k\left([0,T], H^{s-k}\left(\mathbb{T}\right)\right)$, $0 \le k < s - 1/2$, to the equation
\[
\pa_t f+\pa_\alpha S f = N(f) :=2Sf\pa_\alpha f -f\pa_\alpha Sf,
\]
such that $\|f(t)\|_{H^{16}(\mathbb{T}} \lesssim \ep$ for all $t \in [0, T]$.
\end{thm}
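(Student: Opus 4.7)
The plan is to use a three-step modified-energy construction (a Shatah-type normal form phrased directly on the $H^s$ energy) together with a standard bootstrap in $H^{16}$, in order to cancel successively the cubic, quartic and quintic contributions appearing in the time derivative of the Sobolev energy of $f$. Thanks to the $m$-fold symmetry with $m\ge 3$, each of these contributions is almost entirely non-resonant, so the corrections needed to cancel them are bounded. After three steps only sextic-and-higher terms survive, which leads to $\bigl|\tfrac{d}{dt}\tilde E_s(f)\bigr|\lesssim\|f\|_{H^{16}}^4\,\tilde E_s(f)$ and hence, via Gronwall and the bootstrap, the lifespan $T\approx \ep^{-4}$.

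The first task is a linear/resonance analysis. Since $S(\alpha)=-\tfrac{1}{8\pi}(1+3\cos 2\alpha)\log(1-\cos\alpha)$ is real and even, $\mathcal{L}:=\pa_\alpha S$ is skew-adjoint with Fourier multiplier $i\omega(n)=in\hat S(n)$, so the linear flow is an $H^s$-isometry and only the ``profile'' of $f$ matters for the oscillatory dynamics. Expanding the cubic, quartic and quintic contributions in $\tfrac{d}{dt}\|f\|_{H^s}^2$ obtained by iterating the equation, the relevant oscillating phases are
\[
\Omega_k(n_1,\dots,n_k)=\omega(n_1+\cdots+n_k)-\omega(n_1)-\cdots-\omega(n_k),\qquad k=2,3,4.
\]
The key technical lemma is a \emph{quantitative non-resonance} estimate: uniformly in $n_i\in m\Z\setminus\{0\}$ (with $m\ge 3$) such that $n_1+\dots+n_k\in m\Z\setminus\{0\}$, one has $|\Omega_k|\gtrsim 1$. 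This reduces to an explicit computation of $\hat S(n)$ from the kernel, plus an asymptotic expansion of $\omega$ for large $|n|$ together with a finite check on low frequencies, where the $m\ge 3$ assumption rules out the trivial coincidences $n_1+n_2=0$ and their siblings.

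With this in hand I would construct the modified energy
\[
\tilde E_s(f)=\|f\|_{H^s}^2+\mathcal B_3(f)+\mathcal B_4(f)+\mathcal B_5(f),
\]
where each $\mathcal B_k$ is a symmetric $k$-linear form whose Fourier multiplier carries a factor $1/\Omega_{k-1}$ on the non-resonant set, chosen so that the order-$k$ part of $\tfrac{d}{dt}(\|f\|_{H^s}^2+\mathcal B_3+\cdots+\mathcal B_{k-1})$ is exactly cancelled by the order-$k$ part of $\tfrac{d}{dt}\mathcal B_k$ coming from the linear evolution of $f$, for $k=3,4,5$; the resonant pieces vanish by symmetrization of the multilinear form. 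The non-resonance bound also gives $|\mathcal B_k(f)|\lesssim\|f\|_{H^{16}}^{k-2}\|f\|_{H^s}^2$, so under the bootstrap hypothesis $\|f\|_{H^{16}}\le 2\ep$ one has $\tilde E_s(f)\sim\|f\|_{H^s}^2$, and the differential inequality $\tfrac{d}{dt}\tilde E_{16}\lesssim\ep^4\tilde E_{16}$ closes the bootstrap with a factor $3/2$ on $[0,c\ep^{-4}]$. Standard continuation theory for \eqref{f-eqn} then yields $f\in C^k([0,T];H^{s-k})$ for $0\le k<s-1/2$.

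The main obstacle is the \emph{derivative loss} inherent in iterating a normal form on the nonlinearity $N(f)=2Sf\,\pa_\alpha f-f\,\pa_\alpha Sf$, whose transport piece $Sf\,\pa_\alpha f$ has the same top order as $\mathcal{L}$. Even though $\Omega_k^{-1}$ is uniformly bounded, the symbols defining the $\mathcal B_k$ are coupled to derivatives, so I expect to need paradifferential/commutator decompositions at each step, isolating the highest-derivative contributions and exploiting the skew-adjointness of $\mathcal{L}$ to symmetrize them away, while estimating the balanced remainders by Moser-type product inequalities and the Sobolev embedding $\|f\|_{L^\infty}\lesssim\|f\|_{H^{16}}$. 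The numerical threshold $s\ge 16$ reflects precisely the need to reserve enough Sobolev regularity above the bootstrap norm $H^{16}$ to absorb three rounds of this derivative accounting.
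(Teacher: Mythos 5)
Your overall architecture (modified energy via iterated normal forms, bootstrap in $H^{16}$, paradifferential handling of the derivative loss) matches the paper's, but there is a genuine and load-bearing error in your non-resonance claim, and it is exactly the point where the paper has to do real work.

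You assert a uniform bound $|\Omega_k|\gtrsim 1$ for $k=2,3,4$, i.e.\ for the cubic, quartic and quintic interactions, and you attribute the absence of resonances to the $m$-fold symmetry with $m\ge3$. This fails for $k=3$ (the quartic term). Recall $\lambda(n)=\bigl(1+\tfrac{3}{n^2-4}\bigr)\sgn n$, which is odd. Take $n_1=k$, $n_2=-k$, $n_3=l$ with $|k|,|l|\ge 3$ multiples of $m$ and $l\ne 0$; then $n=n_1+n_2+n_3=l\ne 0$ yet $\Omega_3=\lambda(l)-\lambda(k)-\lambda(-k)-\lambda(l)=0$ exactly. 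The $m\ge 3$ restriction only removes frequencies $|n_j|<3$; it does nothing to exclude the cancellation $n_1+n_2=0$. Moreover, even off this degenerate set, because $\lambda(n)\to\sgn n$, one can choose two positive and two negative large frequencies summing to zero and make $|\lambda(n_1)+\lambda(n_2)+\lambda(n_3)+\lambda(n_4)|$ as small as $\min_i|n_i|^{-4}$; there is no uniform lower bound. By parity, no such cancellation occurs for three or five waves, which is why $k=2,4$ genuinely give $\gtrsim 1$ bounds — the quartic level is the special one.

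This matters in two ways that your proposal does not address. First, the quartic modified-energy correction $\mathcal B_4$ cannot simply carry a multiplier $1/\Omega_3$: you must first project off the exactly resonant (totally degenerate) set. The paper does this (Lemma \ref{normal-form-4}) and then shows the projected part vanishes on the diagonal $u_1=\cdots=u_4=f$ because the underlying multilinear form has odd parity in $(n_1,\dots,n_4)\mapsto(-n_1,\dots,-n_4)$; your remark that ``resonant pieces vanish by symmetrization'' gestures at this, but it is inconsistent with your simultaneous claim of uniform non-resonance, and without the parity structure being identified and tracked it is not a proof. Second, even after removing exact resonances, dividing by $\Omega_3$ costs up to four powers of the smallest frequency; this must be propagated through the quintic step and absorbed in the energy estimate, which is why the paper's multilinear-form classes $M_p^{\mu,\nu}$ carry a second exponent $\nu$ on the small frequency (growing from $0$ to $4m+5$, then $4m+10$, then $4m+15$). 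Your Gronwall bound $|\tfrac{d}{dt}\tilde E_s|\lesssim\|f\|_{H^{16}}^4\tilde E_s$ requires this low-frequency exponent to stay below the chosen bootstrap regularity, and that accounting is missing.

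So: the skeleton is right, the cubic and quintic normal forms would go through as you describe, but the quartic step requires (a) the explicit isolation of the degenerate resonant set, (b) the parity argument to kill its contribution, and (c) careful tracking of the $\min|n_i|^{-4}$ loss in the surviving small divisor. Without these the argument does not close.
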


The second result of this paper deals with the existence of travelling solutions for equation \eqref{f-eqn} which yields unbounded Lipschitz rotating solutions for SQG.

\begin{thm}\label{thmtravelling}
For each $c > 0$ and integer $m \ge 3$ there is an open interval $I$ containing 0 such that for all $\xi \in I$, there is a $m$-fold symmetric travelling wave solution $f_{m,\xi}$  of the equation \eqref{f-eqn} such that $f_{m,\xi}$ is analytic in the strip $\{\alpha: |\Im\alpha| < c\}$.
\end{thm}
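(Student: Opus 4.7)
The plan is to apply the Crandall-Rabinowitz bifurcation theorem to the stationary profile equation obtained from the travelling wave ansatz $f(\alpha,t) = F(\alpha-\lambda t)$, which turns \eqref{f-eqn} into
\[
\mathcal{F}(\lambda,F) := -\lambda F' + (SF)' - 2(SF)F' + F(SF)' = 0,
\]
with trivial branch $F\equiv 0$ for every wave speed $\lambda$. I would set up the problem on the Banach scale $X_c^s$ of $2\pi$-periodic, mean-zero, $m$-fold-symmetric, even functions analytic in $\{|\Im\alpha|<c\}$, with norm $\|F\|_{X_c^s}^2 = \sum_n |\hat F(n)|^2 \langle n\rangle^{2s} e^{2c|n|}$, and the analogous space $Y_c^s$ of odd functions. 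A parity count shows that $\mathcal{F}(\lambda,\cdot)\colon X_c^{s+1}\to Y_c^s$ is smooth for $s$ large enough; the single-derivative loss comes from the transport term $F'$ and is harmless in this analytic framework.

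Since $S$ is a convolution it is diagonal in Fourier. The classical identity $\log(1-\cos\alpha) = -\log 2 - 2\sum_{k\ge 1}\cos(k\alpha)/k$, multiplied by $1+3\cos(2\alpha)$, gives after elementary manipulation the Fourier multiplier
\[
\mu(n) := \frac{n^2-1}{|n|(n^2-4)} \qquad\text{for } |n|\ge 3,
\]
so that the linearisation $L_\lambda G = (S-\lambda\,\mathrm{Id})G'$ acts on $e^{in\alpha}$ as multiplication by $in(\mu(n)-\lambda)$. A short derivative check shows that $n\mapsto\mu(n)$ is strictly decreasing on $n\ge 3$, hence injective on the sublattice $\{jm\}_{j\ge 1}$. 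Choosing $\lambda_0 := \mu(m)$ therefore makes the kernel of $L_{\lambda_0}\colon X_c^{s+1}\to Y_c^s$ one-dimensional, spanned by $G_0(\alpha) = \cos(m\alpha)$, while the range is exactly the codimension-one subspace of $Y_c^s$ whose $m$-th Fourier coefficient vanishes.

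Transversality is the computation $\partial_\lambda L_\lambda G_0\big|_{\lambda=\lambda_0} = -G_0' = m\sin(m\alpha)$, whose $m$-th Fourier mode is non-zero and so which does not lie in the range of $L_{\lambda_0}$. The Crandall-Rabinowitz theorem then yields an open interval $I\ni 0$ and a smooth curve $\xi\mapsto (\lambda(\xi),F(\xi))\in\R\times X_c^{s+1}$ with $\lambda(0)=\lambda_0$, $F(\xi) = \xi G_0 + O(\xi^2)$, and $\mathcal{F}(\lambda(\xi),F(\xi))=0$; setting $f_{m,\xi}:=F(\xi)$ produces the desired family, automatically analytic in $\{|\Im\alpha|<c\}$ because this was built into the function space. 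The main technical obstacle I anticipate is verifying that $\mathcal{F}$ extends smoothly on the exponentially weighted spaces for every $c>0$: the convolution kernel $S$ is only logarithmically singular at $\alpha=0$, and one needs the explicit $1/|n|$ decay of $\mu(n)$ together with bilinear product estimates in the analytic Sobolev class to control $SF\cdot F'$ and $F\cdot (SF)'$. With that in hand, the verification of the simple kernel and the transversality condition is direct from the explicit formula for $\mu(n)$, and the result follows.
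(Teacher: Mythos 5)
Your proof is correct and takes essentially the same route as the paper: Crandall--Rabinowitz bifurcation in analytic Sobolev spaces on the strip with the wave speed as the bifurcation parameter, the even/odd parity split isolating $\cos m\alpha$ as the one-dimensional kernel at $\lambda_0=\mu(m)$ (using strict monotonicity of $\mu(n)=\frac{n^2-1}{|n|(n^2-4)}$ on $\{jm\}_{j\geq1}$), and transversality from the mixed second partial $m\sin m\alpha$ having a nonzero $m$-th Fourier mode. The paper additionally checks that the bifurcated branches for nested strip widths $c'>c$ agree on the overlap of their $\xi$-ranges, making the family $\xi\mapsto f_{m,\xi}$ well-defined independently of $c$; for the theorem as stated this is a cosmetic uniqueness remark, and your direct application of Crandall--Rabinowitz after fixing $c$ already yields the claim.
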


\subsection{Main ideas of the proofs}

In this section we give a brief description of the strategy and main ideas used in the proofs of Theorem 1 and 2.

\subsubsection{Strategy of the proof of Theorem 1: On Normal Forms}

The dispersion is the main mechanism used in Theorem 1 to extend the time of existence of the perturbed stationary radial solution. Similar mechanism was used to study the dynamics of patch-type solutions (i.e. piecewise constant solutions) of SQG. The dynamics of the contour of these patches satisfy  a time reversible quasilinear dispersive equation. Global stability of the half-plane patch stationary solution, under small and localized perturbations, was proven in \cite{CGI} with a more singular SQG velocity. See also \cite{hunter2018global, hunter2020global}  for globally asymptotically stable solutions on different related models describing the dynamics of an SQG patch-type solution. There is, however, a proof in \cite{KRYZ} of finite-time singularities for a patch in the presence of a boundary for a less singular SQG velocity (see also \cite{gancedo2018local}).

The equation for the radially homogeneous solution of the SQG equation can be transformed into a nonlinear dispersive equation
\[
\pa_t f+\pa_\alpha S f = N(f) :=2Sf\pa_\alpha f -f\pa_\alpha Sf,
\]
where $S$ is a Fourier multiplier defined by
\[
\mathcal F(Sf)(n) = \frac{|n|^2 - 1}{|n|^3 - 4|n|}\hat f(n)
\]
and the nonlinearity $N(f)$ is a quasilinear one because it involves taking one derivative of $f$. If one runs energy estimates directly, then the loss of derivatives can be avoided by integrating by parts in space, and we get a lifespan of $\approx \ep^{-1}$ because the nonlinearity is a quadratic one. We can, however, do better by taking advantage of the dispersive effect of the linear part $\pa_t f+\pa_\alpha S f=0$. It was first observed by Poincar\'e in the context of ordinary differential equations (see \cite{Ar}) that if the linear evolution
\[
\pa_t f + Af = 0
\]
is non-resonant, in the sense that for any three eigenvectors $f_1$, $f_2$, $f_3$ of $A$, the corresponding eigenvalues $\lambda_1$, $\lambda_2$ and $\lambda_3$ satisfy the condition
\[
\lambda_1 + \lambda_2 \neq \lambda_3,
\]
then any equation of the form
\[
\pa_t f + Af = Q(f, f)
\]
where $Q(f, f)$ is a quadratic form in $f$, can be transformed into one of the form
\[
\pa_t g + Ag = C(g, g, g)
\]
where $g - f$ is a quadratic form in $f$, and $C(g, g, g)$ is a form at least cubic in $g$. Then the growth of $\|g\|$ can be estimated by
\[
\frac{d}{dt}\|g(t)\| \lesssim \|g(t)\|^3
\]
giving a lifespan of $\approx \ep^{-2}$.

The above process is called the ``normal form transformation".
It was extended to the case of partial differential equations by Shatah \cite{Sh}. In this setting it is sometimes more convenient to reformulate the normal form transformation as integration by parts in time as follows:
Let $i\lambda$ be the multiplier of $A = S\pa_\alpha$, that is,
\[
\mathcal F(\pa_\alpha S f)(n) = i\lambda(n)\hat f(n).
\]
Then the linear evolution is
\[
\hat f(\cdot, t)(n) = e^{-it\lambda(n)}\hat f(\cdot, 0)(n).
\]
Putting this in the right-hand side, the nonlinearity becomes
\begin{align*}
\mathcal F(N(f,t))(n)
&= \sum_{n_1+n_2=n} c_{n_1,n_2}\hat f(n_1,t)\hat f(n_2,t)\\
&= \sum_{n_1+n_2=n} c_{n_1,n_2}e^{-it(\lambda(n_1)+\lambda(n_2))}\hat f(n_1,0)\hat f(n_2,0).
\end{align*}
where $c_{n_1,n_2}$ are constants computable from the expression of $N$.
Hence, to the second order, we have that
\begin{align*}
\hat f(\cdot, t)(n) &= e^{-it\lambda(n)}\hat f(\cdot, 0)(n)\\
&+ \int_0^t \sum_{n_1+n_2=n} c_{n_1,n_2}e^{-it\lambda(n)+is(\lambda(n)-\lambda(n_1)-\lambda(n_2))}\hat f(n_1,0)\hat f(n_2,0)ds.
\end{align*}
If for all $n = n_1 + n_2$ we have that
\[
\lambda(n) \neq \lambda(n_1) + \lambda(n_2)
\]
then we can integrate the right-hand side by parts and get
\begin{align*}
\hat f(\cdot, t)(n) &= e^{-it\lambda(n)}\hat f(\cdot, 0)(n)\\
&+ \sum_{n_1+n_2=n} c_{n_1,n_2}\frac{e^{-it(\lambda(n_1)+\lambda(n_2))}-e^{-it\lambda(n)}}{\lambda(n)-\lambda(n_1)-\lambda(n_2)}\hat f(n_1,0)\hat f(n_2,0)ds
\end{align*}
with an error of the form
\[
\int_0^t O(f(s)^3)ds
\]
yielding a lifespan $\approx \ep^{-2}$.

Going further, if for all $n = n_1 + n_2 + n_3$,
\[
\lambda(n) \neq \lambda(n_1) + \lambda(n_2) + \lambda(n_3)
\]
then one can apply the normal form transformation to obtain an evolution equation whose right-hand side is quartic, and a lifespan $\approx \ep^{-3}$ can be shown. More generally, if the normal form transformation can be iterated $n$ times, then one can prove a lifespan $\approx \ep^{-n-1}$.

Unfortunately in our case, the linear operator is $A = \partial_\alpha S$ satisfies the first non-resonance condition, but fails the second. The failure is mild, however, in the sense that all the tuples $(n_1, n_2, n_3)$ satisfying
\[
\lambda(n) = \lambda(n_1) + \lambda(n_2) + \lambda(n_3)
\]
are degenerate, i.e., $(n_1, n_2, n_3) = (k, -k, l)$ or $(k, l, -k)$ or $(l, k, -k)$. In this case, according to Theorem 4.3 in \cite{KaPo}, the equation can be rewritten in the form
\[
\pa_tg + \pa_\alpha Sg = Q(g, g)g + \text{ terms at least quartic in }g.
\]
where $Q(g, g)$ is a Fourier multiplier whose coefficients depend on $g$,
or, an ``integrable symbol" as defined in Section 5 of \cite{BeFePu},
which usually does not cause trouble in $L^2$-based energy estimates.
This effectively amounts to the second application of the normal form transformation. It happens that the third non-resonance condition is also satisfied, i.e., for all $n = n_1 + n_2 + n_3 + n_4$,
\[
\lambda(n) \neq \lambda(n_1) + \lambda(n_2) + \lambda(n_3) + \lambda(n_4)
\]
and one more iteration of the normal form transformation yields a lifespan $\approx \ep^{-4}$. It remains an interesting question if one more iteration of the normal form transformation is possible, which boils down to a Diaphantine equation whose nontrivial integer solutions seem quite illusive.

\subsubsection{Strategy of the proof of Theorem 2: Bifurcation}

In order to prove theorem \ref{thmtravelling} we look for solutions to the equation \eqref{f-eqn}  of the form $f(\alpha,t)=h(\alpha+vt)$ which yields the equation
\begin{align*}
vh'+Sh'=2Shh'-fSh',
\end{align*}
where now the unknowns   are the speed of the way $v$ and the $2\pi-$periodic function $h$. In order to solve this equation we will bifurcate in the parameter $v$ from $h=0$ using the Crandall-Rabinowitz theorem \cite{Crandall-Rabinowitz:bifurcation-simple-eigenvalues}.

\subsection{Outline of the paper}

We start in section \ref{laecuacion} showing a suitable setting for these solutions that are rotationally symmetric around the origin. Part of this task was already done in \cite{EJ20} by T. Elgindi and I-J. Jeong. For sake of completeness we will give all the details of derivation of equation \eqref{f-eqn} from SQG. In section 3 we analyze the dispersion relation and resonances. In section 4 we introduce some technical tools that will be used in section 5 to prove Theorem \ref{longexistence}. Finally, in section 6, we will show Theorem \ref{thmtravelling}.

\section{The equation of motion}\label{laecuacion}
In this section  we derive the equation for $G(\alpha,t)$ in order to obtain solutions of the form $\theta(\bx,t)=|\bx|+|\bx|G(\alpha,t)$. First of all, we have to understand the operator $\Lambda^{-1}$ acting on this kind of unbounded functions. The part involving the term $|\bx|G(\alpha,t)$ was already consider by T. Elgindi and I-J. Jeong in \cite{EJ20}. We present here all the details of a different derivation for sake of completeness.

The equation of motion is
\begin{align}\label{SQG}
&\partial_t\theta + \bu \cdot \nabla\theta = 0,
&\bu=-\nabla^\perp\psi, && \psi=\Lambda^{-1}\theta.
\end{align}
Here the operator $\Lambda=\sqrt{-\Delta}.$

In $\R^2$ the operator $\Lambda^{-1}$ is given by
\begin{align}\label{rep1}
\Lambda^{-1}\theta(\bx)=\frac{1}{2\pi}\int_{\R^2}\frac{\theta(\by)}{|\bx-\by|}d\by
\end{align}
for functions $\theta(\bx)$ which decay fast enough at the infinity. As explained in the introduction we will study solutions of \eqref{SQG} of the type
\begin{align*}
\theta(\bx,t)=|\bx|+|\bx|G(\alpha,t),
\end{align*}
where $\alpha$ is the argument of $\bx$ and $G(\alpha,t)$ is a real function such that $$G(\alpha,t)=\sum_{|n|\geq 3} G_n(t)e^{in \alpha}.$$ Because the lack of decay at the infinity of these function we can not use the representation \eqref{rep1} for $\Lambda^{-1}$. Instead of that we will use a different representation that we introduce below.

We will use polar coordinates \begin{align*}
&\bx(\rho,\alpha)=\rho(\cos(\alpha),\, \sin(\alpha)), \\
&\be_\rho=(\cos(\alpha),\sin(\alpha)), \quad  \be_\alpha=(-\sin(\alpha),\cos(\alpha)),\\
&\nabla = \be_\rho\pa_\rho +\frac{1}{\rho}\be_\alpha \pa_\alpha.
\end{align*}
We will also use the notation $\overline{f}(\rho,\alpha)=f(\bx(\rho,\alpha))$ for a general function $f: \R^2\to \R$.

Then $\psi(\bx)=\Lambda^{-1}_{\text{new}}\theta(\bx)$ will be given by
\begin{align}\label{rep2}
\overline{\psi}(\rho,\alpha)=\Lambda^{-1}_{\text{new}}\theta(\bx(\rho,\alpha))=\frac{1}{2\pi}P.V.\int_{0}^\infty\int_{0}^{2\pi}k(\rho,s,\alpha,\beta) \overline{\theta}(s,\beta)d\beta sds,
\end{align}
where
\begin{align*}
k(\rho,s,\alpha,\beta)&=\frac{1}{\sqrt{\rho^2+s^2-2\rho s\cos(\alpha-\beta)}}
-\frac{1}{\sqrt{\rho^2+s^2}}-\frac{\rho}{\rho^2+s^2}\cos(\alpha-\beta)\\ &
-\frac{3}{2}\frac{\rho^2}{\left(\rho^2+s^2\right)^\frac{3}{2}}\cos^2(\alpha-\beta)+A\frac{\rho^4}{\left(\rho^2+s^2\right)^k},
\end{align*}
where $P.V.$ means the principal value at the infinity, and $A\in\R$, $k\geq 3$ are suitable constants we will next choose so as to guarantee that
\[
\Lambda_{\text{new}}^{-1}\Lambda_{\text{new}}^{-1}\theta(\bx(\rho,\alpha))
=(-\Delta)^{-1}\theta(\bx(\rho,\alpha)).
\]

We compute the left-hand side. For the kernel we have that
\begin{align*}
&k(\rho,s,\alpha,\beta)=O(s^{-4}),\\
&k(\rho,s,\alpha,\beta)-\frac{\rho^3}{\left(\rho^2+s^2\right)^2}\left(-\frac{1}{2}+\frac{5}{2}\cos^2(\alpha-\beta)\right)\cos(\alpha-\beta)=O(s^{-5})\end{align*} for $s\to \infty$  and then, by using \eqref{rep2},
\begin{align*}
\Lambda^{-1}_{\text{new}}|\bx(\rho,\alpha)|= c_1(A,k) \rho^2,
\end{align*}
with the constant $c_1(A,k)$ given by the absolutely convergent integral
\begin{align*}
c_1(A,k)&=\frac{1}{2\pi}P.V.\int_{0}^\infty \int_{0}^{2\pi} k(1,s,0,\beta)d\beta s^2 ds
=\frac{1}{2\pi}\int_{0}^{2\pi}\int_{0}^\infty k(1,s,0,\beta)s^2 ds d\beta\\
&=d_1+d_2(k)A,\text{ where }d_2(k)=\frac{(2k-5)!!}{4(2k-2)!!}=\frac{1\cdot3\cdots(2k-5)}{4(2\cdot4\cdots(2k-2))}.
\end{align*}
One more application of the operator $\Lambda_{\text{new}}^{-1}$ gives
\begin{align*}
\Lambda^{-1}_{\text{new}}\Lambda^{-1}_{\text{new}}|\bx(\rho,\alpha)|
=c_1(A,k)\Lambda ^{-1}_{\text{new}}|\bx(\rho,\alpha)|^2=c_1(A,k)c_2(A,k)\rho^3,
\end{align*}
with the constant $c_2(A,k)$ given by the absolutely convergent integral
\begin{align*}
c_2(A,k)&=\int_{0}^{2\pi}\frac{1}{2\pi}\int_{0}^\infty \left( k(1,s,0,\beta)-\frac{1}{\left(1+s^2\right)^2}\left(-\frac{1}{2}+\frac{5}{2}\cos^2(\alpha-\beta)\right)\cos(\alpha-\beta)\right)s^3 ds d\beta\\
&=h_1+h_2(k)A,\text{ where }h_2(k)=\frac{1}{4\pi(k-1)(k-2)}.
\end{align*}
Then $p(A,k)\equiv c_1(A,k)c_2(A,k)$ can take values in $[p_{\min},+\infty)$, where $p_{\text{min}}=-\frac{d_1^2}{2}\frac{h_2(k)}{d_2(k)}-\frac{h_1^2}{2}\frac{d_2(k)}{h_2(k)}$. From $\frac{(2k-1)!!}{(2k)!!}>\frac{1}{2}\cdot\frac{(2k-2)!!}{(2k-1)!!}$ we deduce that $\frac{(2k-1)!!}{(2k)!!}>\frac{1}{2\sqrt k}$, so $\frac{h_2(k)}{d_2(k)}=O(1/\sqrt k)$ and by taking $k$ large we can make it as small as we want. Therefore there exist $k^*$ and $A^*$ such that $p(A^*,k^*)=-\frac{1}{9}$. For these values of $A$ and $k$ we have that $$\Lambda^{-2}_{\text{new}}|\bx(\alpha,\rho)|=(-\Delta)^{-1}|\bx(\alpha,\rho)|= -\frac{1}{9}\rho^3.$$

In addition, for $\Lambda^{-1}_{\text{new}}\left(|\bx|G(\alpha,t)\right)$ we have that, by dominated convergence,
\begin{align*}
\Lambda^{-1}_{\text{new}}\left(|\bx|G(\alpha,t)\right)=\Lambda^{-1}_{\text{new}}\left(\lim_{\ep\to 0}e^{-\ep |\bx|^2}|\bx|G(\alpha,t)\right)=\lim_{\ep\to 0}\Lambda^{-1}_{\text{new}}\left(e^{-\ep |\bx|^2}|\bx|G(\alpha,t)\right).
\end{align*}
Similarly
\begin{align*}
\Lambda^{-1}_{\text{new}}\Lambda^{-1}_{\text{new}}\left(|\bx|G(\alpha,t)\right)=\Lambda^{-1}_{\text{new}}\lim_{\ep\to 0}\Lambda^{-1}_{\text{new}}\left(e^{-\ep |\bx|^2}|\bx|G(\alpha,t)\right)=\lim_{\ep\to 0}\Lambda^{-1}_{\text{new}}\Lambda^{-1}_{\text{new}}\left(e^{-\ep |\bx|^2}|\bx|G(\alpha,t)\right)
\end{align*}
Because the first three Fourier modes of $G(\alpha)$ are zero we have that $\Lambda^{-1}_{\text{new}}\left(e^{-\ep |\bx|^2}|\bx|G(\alpha,t)\right)=\Lambda^{-1}\left(e^{-\ep |\bx|^2}|\bx|G(\alpha,t)\right)$. Also the three first modes of $\Lambda^{-1}\left(e^{-\ep |\bx|^2}|\bx|G(\alpha,t)\right)$ will be zero and then
\begin{align*}
\Lambda^{-1}_{\text{new}}\Lambda^{-1}_{\text{new}}\left(e^{-\ep |\bx|^2}|\bx|G(\alpha,t)\right)=\Lambda^{-2}\left(e^{-\ep |\bx|^2}|\bx|G(\alpha,t)\right)
=(-\Delta)^{-1} \left(e^{-\ep |\bx|^2}|\bx|G(\alpha,t)\right).
\end{align*}
And from here we have that
\begin{align*}
\Lambda^{-1}_{\text{new}}\Lambda^{-1}_{\text{new}}\left(|\bx|G(\alpha,t)\right)=(-\Delta)^{-1}\left(|\bx|G(\alpha,t)\right).
\end{align*}

Then for function of the type $\theta(\bx)=|\bx|+|\bx|G(\alpha)$, with $G(\alpha)=\sum_{|n|\geq 3} G_n e^{inx}$, satisfies $\Lambda^{-2}_{\text{new}}\theta(\bx)=(-\Delta)^{-1}\theta(\bx).$ From now on we will remove the subscript ''new'' in the $\Lambda_\text{new}$.

Plugging in \eqref{SQG} the ansatz $\theta(\bx,t)=|\bx|+|\bx|G(\alpha,t)$ we have that
\begin{align*}
&|\bx|\pa_t G(\alpha,t)-\nabla^\perp\Lambda^{-1}|\bx|\cdot\nabla \left(|\bx|G(\alpha,t)\right)-\nabla^\perp \Lambda^{-1}\left(|\bx|G(\alpha,t)\right)\cdot \nabla |\bx|
\\- &\nabla^\perp\Lambda^{-1}\left(|\bx|G(\alpha,t)\right)\cdot \nabla \left(|\bx|G(\alpha,t)\right)=0.
\end{align*}
That in radial coordinates reads
\begin{align*}
&\rho\pa_t G(\alpha,t)-2c \rho e^\perp_\rho\cdot \nabla (\rho G(\alpha,t))-e_r\cdot\nabla^\perp\Lambda^{-1}\left(\rho G(\alpha,t)\right)\\
&= \nabla^\perp \Lambda^{-1}(\rho G(\alpha,t))\cdot \nabla\left(\rho G(\alpha,t)\right),
\end{align*}
that we can write
\begin{align}\label{aux1}
&\rho\pa_t G(\alpha,t)-2c\rho \pa_\alpha G(\alpha,t)+\frac{1}{\rho}\pa_\alpha\Lambda^{-1}\left(\rho G(\alpha,t)\right)=\nonumber\\
&\pa_\rho\Lambda^{-1}(\rho G(\alpha,t))\pa_\alpha G(\alpha,t)-\frac{1}{\rho}\pa_\alpha \Lambda^{-1}(\rho G(\alpha,t)) G(\alpha,t).
\end{align}

Note that $k(\rho,s,\alpha,\beta)$ only depends on $\rho$, $s$ and $\alpha-\beta$, so
\begin{align*}
&\Lambda^{-1}(\rho G(\alpha,t))=\frac{1}{2\pi}\int_{0}^\infty \int_{0}^{2\pi} k(\rho,s,\alpha,\beta)s^2 G(\beta,t) dsd\beta\\&=
 \rho^2\int_{0}^{2\pi}K(\alpha-\beta)G(\beta,t)d\beta\equiv \rho^2 K G(\alpha,t)
\end{align*}
where
\begin{align*}
K(\alpha-\beta)=\frac{1}{2\pi}\int_{0}^\infty k(1,s,\alpha, \beta) s^2 ds.
\end{align*}

We can compute that

\begin{align*}
K(\alpha)=d_2(k)A-\frac{1}{8\pi}(6-2\pi+\cos(\alpha))\cos(\alpha)-\frac{1}{8\pi}(1+3\cos(2\alpha))\log(1-\cos(\alpha)).
\end{align*}
For function $G(\alpha,t)=\sum_{|n|\geq 3} \hat{G}_n e^{in\alpha}$ we have that
\begin{align}\label{soperator}
K G(\alpha,t)=SG(\alpha,t)\equiv\int_{0}^{2\pi}S(\alpha-\beta)G(\beta,t)d\beta
\end{align}
with
\begin{align*}
S(\alpha)=-\frac{1}{8\pi}(1+3\cos(2\alpha))\log(1-\cos(\alpha)).
\end{align*}
Then we can write \eqref{aux1}
\begin{align*}
\pa_t G-2c\pa_\alpha G+\pa_\alpha S G =2SG\pa_\alpha G -G\pa_\alpha SG.
\end{align*}

Let $f(\alpha,t)=G(\alpha-2ct,t)$, which yields \eqref{f-eqn}.

\section{Dispersion relation}

In this section we compute de Fourier transform of the function
\begin{align*}
S(\alpha)=-\frac{1}{8\pi}(1+3\cos(2\alpha))\log(1-\cos(\alpha)).
\end{align*}

We will use as definition of the Fourier transform, for a $2\pi-$periodic function $f$,
\begin{align*}
\hat{f}_n=\frac{1}{2\pi}\langle f, e^{-inx}\rangle =\frac{1}{2\pi}\int_{-\pi}^\pi f(x)e^{-inx}dx.
\end{align*}
Thus $$f(x)=\sum_{n=-\infty}^\infty \hat{f}_n e^{inx}.$$
In addition, if $f$ is a real and even function
$$f=\hat{f}_0+2\sum_{n=1}^\infty \hat{f}_n \cos(nx).$$

By Gradshteyn and Ryzhik 1.441 (2),
\begin{align*}
\ln(1 - \cos\alpha) = -2\sum_{n\ge1} \frac{\cos n\alpha}{n} - \ln 2
\end{align*}
so
\begin{align*}
&(3\cos2\alpha +1 )\ln(1 - \cos\alpha)
= -\sum_{n\ge1} \frac{6\cos2\alpha + 2}{n}\cos n\alpha - (3\cos2\alpha + 1)\ln2\\
&= -\sum_{n\ge1} \frac{3(\cos(n + 2)\alpha + \cos(n - 2)\alpha)}{n}
- \sum_{n\ge1} \frac{2\cos n\alpha}{n}- (3\cos2\alpha + 1)\ln2\\
&=-6\cos\alpha - \left( \frac{7}{4} + 3\ln2 \right)\cos2\alpha
- \frac{3}{2} - \ln2 - \sum_{n\ge3} \left( \frac{3}{n - 2} + \frac{3}{n + 2} + \frac{2}{n} \right)\cos n\alpha\\
&=-6\cos\alpha - \left( \frac{7}{4} + 3\ln2 \right)\cos2\alpha
- \frac{3}{2} - \ln2 - \sum_{n\ge3} \frac{8(n^2 - 1)}{n^3 - 4n}\cos n\alpha
\end{align*}
Therefore, for  $n\geq 3$
\begin{align*}
\left((3\cos2\alpha + 1)\ln(1 - \cos\alpha)\right)^{\widehat{}}_n
=-4\frac{|n|^2-1}{|n|^3-4|n|}
\end{align*}
and
\begin{align*}
\hat{S}_n=\frac{1}{2\pi}\frac{|n|^2-1}{|n|^3-4|n|},
\end{align*}
which behaves like $ \frac{1}{2\pi |n|}$, for $|n|\to \infty$.

We summary this section in the following lemma
\begin{lem}\label{multiplier} Let $S$ be the operator given by \eqref{soperator} and $f$ a $2\pi$-periodic  $C^1$-function. Then, for $|n|\geq 3$,
\begin{align}\label{fsf}
\widehat{Sf}_n=\frac{|n|^2-1}{|n|^3-4|n|},
\end{align}
and
\begin{align*}
\widehat{\pa_\alpha Sf}=i\lambda(n),
\end{align*}
with $$\lambda(n)=\left(1+\frac{3}{n^2-4}\right)\sgn n.$$
\end{lem}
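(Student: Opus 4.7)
The plan is to compute $\hat S_n$ directly from the definition of $S(\alpha)$ via the known Fourier series of $\log(1-\cos\alpha)$, and then read off the multipliers for $S$ and $\partial_\alpha S$.

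First I would recall (as stated, from Gradshteyn–Ryzhik 1.441(2)) that
\[
\log(1-\cos\alpha)=-\log 2-2\sum_{n\ge 1}\frac{\cos n\alpha}{n}.
\]
Next I would multiply this series by the trigonometric polynomial $1+3\cos 2\alpha$, using the product-to-sum identity $2\cos 2\alpha\cos n\alpha=\cos(n+2)\alpha+\cos(n-2)\alpha$ to expand each term and regroup by frequency. The $\cos n\alpha$ coefficient for $n\ge 3$ receives three contributions: $-2/n$ from the $(1)\cdot(\cdot)$ piece, $-3/(n-2)$ from the term $\cos(n-2)\alpha$ whose index becomes $n$ when the dummy variable is $n-2$, and $-3/(n+2)$ from $\cos(n+2)\alpha$ with dummy $n+2$. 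Summing gives the coefficient
\[
-\left(\frac{3}{n-2}+\frac{3}{n+2}+\frac{2}{n}\right)=-\frac{8(n^2-1)}{n^3-4n}.
\]
Multiplying by the prefactor $-\tfrac1{8\pi}$ in the definition of $S(\alpha)$ and using that $S$ is real and even (so $\hat S_n=\hat S_{-n}$, and the coefficient of $\cos n\alpha$ equals $2\hat S_n$), I would conclude
\[
\hat S_n=\frac{1}{2\pi}\,\frac{|n|^2-1}{|n|^3-4|n|},\qquad |n|\ge 3.
\]

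For the operator identity \eqref{fsf}, I would use that $Sf$ is a $2\pi$-periodic convolution, so its Fourier coefficients are $\widehat{Sf}_n=2\pi\,\hat S_n\,\hat f_n$, canceling the $1/(2\pi)$ above and yielding the stated multiplier. Finally, applying the identity $\widehat{\partial_\alpha g}_n=in\,\hat g_n$, I would compute
\[
in\cdot\frac{|n|^2-1}{|n|^3-4|n|}=i\,\sgn(n)\,\frac{n^2-1}{n^2-4}=i\,\sgn(n)\left(1+\frac{3}{n^2-4}\right)=i\lambda(n),
\]
giving the dispersion relation.

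The computation is essentially bookkeeping; the only mildly delicate point is the combinatorial regrouping that produces the unified formula $-\tfrac{3}{n-2}-\tfrac{3}{n+2}-\tfrac{2}{n}$ for all $n\ge 3$ simultaneously. The restriction $|n|\ge 3$ ensures $n-2\neq 0$ so the shifted indices remain in the range where our expansion is unambiguous; the exceptional modes $n\in\{0,\pm 1,\pm 2\}$ are precisely those excluded by the $m$-fold symmetry hypothesis (with $m\ge 3$) used throughout the paper, so they play no role in the subsequent analysis.
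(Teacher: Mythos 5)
Your argument is correct and reproduces essentially the same computation the paper gives in the text preceding the lemma: the Gradshteyn--Ryzhik Fourier series of $\log(1-\cos\alpha)$, the product-to-sum regrouping with $1+3\cos 2\alpha$ yielding the coefficient $-8(n^2-1)/(n^3-4n)$, and then the convolution identity $\widehat{Sf}_n=2\pi\hat S_n\hat f_n$ and $\widehat{\partial_\alpha g}_n=in\hat g_n$. The paper's proof environment itself is just the one-line convolution remark, so your write-up is simply a more self-contained version of the same route, and the arithmetic checks out.
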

\begin{proof} Notice that $\widehat{f\circ g}_n=2\pi \hat{f}_n\hat{g}_n$.
\end{proof}

\begin{rem} We notice that $G$ in \cite{EJ20}-section 4.2 and $Sf$ in this paper must be the same. In \cite{EJ20}-Lemma 4.4 is obtained
$$\hat{G}_k=\frac{1}{-|k|-\frac{3|k|}{k^2-1}}\hat{f}_k,\quad |k|\geq 2,$$
which does not agree with \eqref{fsf}. However in \cite{EJ20}, in the proof of Lemma 4.4, is used that
\begin{align*}
\frac{1}{2\pi}\int \frac{G(\theta)-G(\overline{\theta})}{2\sin^2\left(\frac{\theta-\overline{\theta}}{2}\right)}d\overline{\theta}=-\Lambda \theta.
\end{align*}
Unfortunately, the minus sign is wrong in that formula. In order to check it one can notice that $\Lambda G$ at the maximum of $G$ must be positive. With the correct sign Elgindi and Jeong obtain
\begin{align*}\underbrace{|k|}_{\text{from $\Lambda$ with the correct sign}}+\underbrace{\frac{-3|k|}{k^2-1}}_{\text{from the last integral in pag. 40 divided by $2\pi$}}\hat{G}_k=\hat{f}_k
\end{align*}
which agrees with \eqref{fsf}.
\end{rem}

\subsection{Analysis of resonances}

In this section we shall study the resonances of the linear part of equation \eqref{f-eqn} which is given by $\pa_\alpha Sf$. Here $S$ is the operator in \eqref{soperator} whose multiplier can be found in lemma \ref{multiplier}. Indeed we  will show that there are no 3, 4 or 5 wave resonant interactions, except 4 wave degenerate interactions.
\begin{lem}\label{denom-bound}
Let $n_1,\dots,n_p \in \Z$ and each $n_j$ satisfy $|n_j| \ge 3$. Let $\lambda(n)$ as in lemma \ref{multiplier}. The following hold:

\begin{enumerate}[(i)]

\item  If $p = 3, 5$ and $n_1 + \cdots + n_p = 0$ then
\[
|\lambda(n_1) + \cdots + \lambda(n_p)| \gtrsim 1.
\]

\item  If $p = 4$ and the tuple $(n_1, n_2, n_3, n_4)$ is not totally degenerate, i.e., is not a permutation of $(k, -k, l, -l)$ for some $k$, $l \in \Z$, then
\[
|\lambda(n_1) + \cdots + \lambda(n_4)| \gtrsim\min(|n_1|,\dots,|n_4|)^{-4}.
\]
\end{enumerate}
\end{lem}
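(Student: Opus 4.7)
The plan is to write $\lambda(n) = \sgn(n) + \sigma(n)$ with $\sigma(n):=3\sgn(n)/(n^2-4)$, so that the ``main term'' $\sum_j \sgn(n_j)$ of $\sum_j \lambda(n_j)$ is an integer while the ``error'' $\sum_j \sigma(n_j)$ is small and controlled by the monotonicity and convexity of $x\mapsto 3/(x^2-4)$ on $[3,\infty)$. For part (i), the constraint $\sum n_j=0$ with $|n_j|\ge 3$ prevents the signs from all agreeing, so the sign count has the parity of $p$ but absolute value at most $p-2$. When $p=3$, this forces count $\pm 1$: WLOG two positives $n_1,n_2$ with $n_3=-(n_1+n_2)$, and since $|n_3|>|n_i|$ gives $|\sigma(n_3)|<\sigma(n_i)$, making the error positive and $|\sum\lambda|>1$. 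For $p=5$ with count $\pm 3$, the lone opposite-sign index has $|n|\ge 12$ so its error is at most $3/140$ and $|\sum\lambda|>2$. For $p=5$ with count $\pm 1$ (three vs.\ two, with the two-side absolute values summing to $\ge 9$), convexity of $3/(x^2-4)$ maximizes the two-side error at the endpoint $\{3,6\}$, giving $\le 3/5+3/32$, and hence $|\sum\lambda|\ge 1-(3/5+3/32) = 49/160 > 0$.

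For part (ii) (with $\sum n_j=0$ supplied by the resonance structure), the sign count is $0$ or $\pm 2$; the $\pm 2$ case is handled as in (i) since the lone opposite-sign index has $|n|\ge 9$. The delicate case is count $0$: positives $a\le b$, negatives $-c,-d$ with $c\le d$ and $a+b=c+d$. Non-total-degeneracy gives $\{a,b\}\ne\{c,d\}$, so WLOG $a<c\le d\le b$ and $\min(|n_j|)=a$. The signs cancel and $\sum\lambda = 3[f(a)+f(b)-f(c)-f(d)]$ with $f(x)=1/(x^2-4)$, which is strictly positive by convexity of $f$ on $(2,\infty)$ together with the majorization $(a,b)\succ(c,d)$. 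To quantify the gap I would split at $c=2a$. For $c\ge 2a$, direct algebra gives $f(a)-f(c)\ge 3/(4a^2)$ and $|f(d)-f(b)|\le f(d)\le 1/(3a^2)$, so $|\sum\lambda|\gtrsim 1/a^2\gtrsim\min^{-4}$. For $c<2a$, I would use the double-integral identity
\[
\frac{\sum\lambda}{3}=\int_0^{c-a}\int_{a+t}^{d+t}|f''(y)|\,dy\,dt,
\]
together with the crude bound $|f''(y)|\ge 6/y^4\ge 3/(8a^4)$ on $y\le 2a$, integrating only over the sub-region where $y\le 2a$ to obtain $|\sum\lambda|\gtrsim 1/a^4=\min^{-4}$.

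The hardest step is this last estimate: the bound $\min^{-4}$ is essentially tight, attained for $(a,b,c,d)=(n,n+2,n+1,n+1)$ where Taylor expansion at $n+1$ gives $\sum\lambda\approx 3f''(n+1)\asymp 1/n^4$. The integral/convexity approach encodes this scaling uniformly, avoiding an unwieldy case analysis based on the positions of $a,b,c,d$ relative to $(a+b)/2$.
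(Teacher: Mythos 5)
Your proof is correct and follows essentially the same strategy as the paper: control the sum via the sign count (which is a nonzero integer in the non-degenerate cases) plus the small rational correction $3\sgn(n)/(n^2-4)$, and in the delicate count-$0$ case of (ii) use the second-difference double-integral identity together with the lower bound $f''(y)\gtrsim y^{-4}$. A few remarks on the comparison. First, your explicit decomposition $\lambda=\sgn+\sigma$ and the sign-count bookkeeping is a slightly tidier way to present what the paper does with direct numerical bounds such as $2-8/5$ or $3-8/5-8/7$; both routes give constants bounded away from zero. Second, in part (ii) the extra split at $c=2a$ is unnecessary: the paper simply restricts the double integral to the unit square $[a,a+1]\times[0,1]$, where $x+y\in[a,a+2]\subset(2,2a]$ and $\lambda''\gtrsim a^{-4}$, and this handles all non-degenerate count-$0$ tuples at once; your own integral argument also already covers the $c\ge 2a$ case (the sub-region where $y\le 2a$ is always at least a unit square in the corner $t\in[0,1]$, $y\in[a+t,a+t+1]$), so the algebraic case can be dropped. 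Third, you correctly observed that the hypothesis $\sum n_j=0$, though not written in the statement of (ii), is used in the proof and is the relevant setting; the paper's proof uses it tacitly. Finally, a small caveat in your $p=5$, count $\pm1$ argument: ``convexity maximizes at the endpoint $\{3,6\}$'' should be supplemented with monotonicity of $3/(x^2-4)$ to reduce from $a+b\ge 9$ to $a+b=9$, and then one checks the two integer pairs $(3,6),(4,5)$; the conclusion $3/5+3/32<1$ is correct.
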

\begin{proof}
(i) If $p = 3$, without loss of generality assume $n_1$ and $n_2 > 0$. Then
\[
\lambda(n_1) + \lambda(n_2) + \lambda(n_3)
= \lambda(n_1) + \lambda(n_2) - \lambda(n_1 + n_2)
> 2 - \frac{8}{5} = \frac{2}{5}.
\]

If $p = 5$, without loss of generality we can assume $n_1$, $n_2$, $n_3 > 0$ and $n_4 \ge n_5$. If $n_4 > 0$ then
\[
\lambda(n_1) + \lambda(n_2) + \lambda(n_3) + \lambda(n_4) + \lambda(n_5)
> 4 - \frac{8}{5} > 2.
\]
If $n_5 \le n_4 < 0$ then $n_4 + n_5 = -n_1 - n_2 - n_3 \le -9$, so $n_5 \le -5$ and
\[
\lambda(n_1) + \lambda(n_2) + \lambda(n_3) + \lambda(n_4) + \lambda(n_5)
> 3 - \frac{8}{5} - \frac{8}{7} = \frac{9}{35}.
\]

(ii) Without loss of generality we assume $n_1$, $n_2 > 0$ and $n_3 \ge n_4$. If $n_3 > 0$ then
\[
\lambda(n_1) + \lambda(n_2) + \lambda(n_3) + \lambda(n_4)
> 3 - \frac{8}{5} > 1.
\]
If $n_4 \le n_3 < 0$ then we can further assume $|n_2| \le |n_3| \le |n_4| \le |n_1|$. In this range
\[
\lambda(n_1) + \lambda(n_2) + \lambda(n_3) + \lambda(n_4)
= \int_{n_2}^{-n_4} \int_0^{n_1+n_4} \lambda''(x + y)dydx.
\]
If $n_2 + n_3 = 0$ then $n_3 = -n_2$ and $n_4 = -n_1$, and we get totally degenerate interactions. Otherwise $-n_4 \ge -n_3 \ge n_2 + 1$ and $n_1 + n_4 \ge 1$, so using $\lambda''(x) \gtrsim 1/x^4$ for $x > 2$ we get
\[
\lambda(n_1) + \lambda(n_2) + \lambda(n_3) + \lambda(n_4)
\ge \int_{n_2}^{n_2+1} \int_0^1 \lambda''(x + y)dydx \gtrsim \frac{1}{n_2^4}.
\]
\end{proof}

%
\section{Paradifferential operators and remainders}

To bound the multilinear terms arising from iterated normal form transformation, we define some multilinear operators that we will use frequently.
Let $\Delta_x$ denote the difference in the $x$ variable, i.e., $\Delta_x f(x, \dots) = f(x + 1, \dots) - f(x, \dots)$, $\Delta_x^m$ be $\Delta_x$ iterated $m$ times, and $|_{x=a}$ denote evaluation at $x = a$.

\begin{df}[Multilinear forms]\label{mf}
For $p \ge 3$, $\mu \in \R$ and an increasing function $\nu: \N \to \R$,
let $M_p^{\mu.\nu}$ be the space of $p$-linear maps $M: C^\infty(\T)^p \to \C$ of the form
\[
M(u_1,\dots,u_p) = \sum_{n_1+\cdots+n_p=0,|n_j|\ge3} m(n_1,\dots,n_p)\hat u_1(n_1)\cdots\hat u_p(n_p)
\]
satisfying, for $|n_j| \ge 3$, $N$ being the largest among $|n_1|,\dots,|n_p|$, and $n$ being the third largest,

\begin{enumerate}[(i)]

\item $|m(n_1,\dots,n_p)| \lesssim N^\mu n^{\nu(0)}$, and

\item  For $m \in \N^+$ there is $\delta_m > 0$ such that if $n < \delta_mN$ and $n < |n_j|$, $|n_k|$ then
\[
|\Delta_x^m|_{x=0}m(n_1,\dots,n_j + x,\dots,n_k - x,\dots,n_p)|
\lesssim_m N^{\mu-m}n^{\nu(m)}
\]

\end{enumerate}

Let
\[
M_{p\pm}^{\mu,\nu} = \{M \in M_p^{\mu,\nu}: m(-n_1,\dots,-n_p) = \pm m(n_1,\dots,n_p)\}.
\]
\end{df}

Clearly, for $\alpha \ge 0$, $M_{p(\pm)}^{\mu,\nu} \subset M_{p(\pm)}^{\mu+\alpha,\nu-\alpha} \subset M_{p(\pm)}^{\mu+\alpha,\nu}$,
with the convention that it includes three versions, one for $M_p^{\mu,\nu}$, one for $M_{p+}^{\mu,\nu}$ and one for $M_{p-}^{\mu,\mu}$.

\begin{lem}\label{mult-bound}
Given $M \in M_p^{2\mu,\nu}$, for $\mu \ge s > \nu(0) + 1/2$ and $u \in H^\mu(\T)$ we have
\[
|M(u,\dots,u)| \lesssim_s \|u\|_{H^s}^{p-2}\|u\|_{H^\mu}^2.
\]
\end{lem}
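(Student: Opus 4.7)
The plan is to reduce the bound on $M(u, \dots, u)$ to a standard Hölder--Sobolev estimate on $\T$, after extracting two factors carrying the top-order derivative weight $\mu$ and one factor carrying weight $\nu(0)$.

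First I would exploit the zero-sum constraint $n_1 + \cdots + n_p = 0$: if $|n_{(1)}| \ge |n_{(2)}| \ge \cdots \ge |n_{(p)}|$ is the decreasing rearrangement, then
\[
|n_{(1)}| \le \sum_{j \ge 2} |n_{(j)}| \le (p-1)|n_{(2)}|.
\]
In particular $N^{2\mu} = |n_{(1)}|^{2\mu} \lesssim_p |n_{(1)}|^\mu |n_{(2)}|^\mu$, so the defining bound (i) of $M_p^{2\mu,\nu}$ upgrades to
\[
|m(n_1,\dots,n_p)| \lesssim |n_{(1)}|^\mu |n_{(2)}|^\mu |n_{(3)}|^{\nu(0)}.
\]
Next I would symmetrize the index positions. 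Since the three largest positions depend on the tuple, I bound pointwise
\[
|n_{(1)}|^\mu |n_{(2)}|^\mu |n_{(3)}|^{\nu(0)} \le \sum_{\sigma} |n_{\sigma(1)}|^\mu |n_{\sigma(2)}|^\mu |n_{\sigma(3)}|^{\nu(0)},
\]
where $\sigma$ ranges over injections $\{1,2,3\}\to\{1,\dots,p\}$ (the term for the true top three appears in the sum). After swapping the order of summation and relabeling dummies, each $\sigma$-summand is identical, so the number of injections is absorbed into an implicit constant.

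Then I would pass to the auxiliary function $v$ with $\hat v(n) = |\hat u(n)|$, which satisfies $\|v\|_{H^r} = \|u\|_{H^r}$ for every $r$. The Plancherel-type identity identifies
\[
\sum_{n_1+\cdots+n_p=0} |n_1|^\mu |n_2|^\mu |n_3|^{\nu(0)} \hat v(n_1)\cdots\hat v(n_p)
= \int_{\T} \Lambda^\mu v \cdot \Lambda^\mu v \cdot \Lambda^{\nu(0)} v \cdot v^{p-3} \, d\alpha,
\]
where $\Lambda^r$ denotes the Fourier multiplier $|n|^r$ (truncated to $|n|\ge 3$). Hölder's inequality with exponents $(2,2,\infty,\infty,\dots,\infty)$ yields
\[
|M(u,\dots,u)| \lesssim \|\Lambda^\mu v\|_{L^2}^2 \, \|\Lambda^{\nu(0)} v\|_{L^\infty} \, \|v\|_{L^\infty}^{p-3}.
\]
The first factor is $\|u\|_{H^\mu}^2$. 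The one-dimensional Sobolev embedding $H^{t}(\T)\hookrightarrow L^\infty(\T)$ for $t>1/2$ gives $\|\Lambda^{\nu(0)} v\|_{L^\infty} \lesssim_s \|v\|_{H^{s}}$ precisely under $s > \nu(0)+1/2$, and $\|v\|_{L^\infty} \lesssim_s \|v\|_{H^{s}}$ under $s > 1/2$ (implied by the hypothesis once one notes that for $\nu(0)<0$ one may first bound $|n_3|^{\nu(0)} \le 3^{\nu(0)}$ since $|n_3|\ge 3$, reducing to the $\nu(0)=0$ case). Assembling the factors gives the claim.

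The main obstacle I expect is the asymmetric nature of the symbol bound, which treats the third-largest frequency differently from the others: the step in which I replace the ordering-dependent bound $|n_{(1)}|^\mu|n_{(2)}|^\mu|n_{(3)}|^{\nu(0)}$ with a fully symmetric sum over injections $\sigma$ is the pivot that allows the final estimate to be read off as a clean integral product. The other ingredients (zero-sum comparability $N\sim|n_{(2)}|$, Parseval, Hölder, Sobolev) are standard.
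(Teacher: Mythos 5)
Your argument is correct and essentially identical to the paper's: both majorize the symbol pointwise by a symmetric sum of weights $|n_j|^\mu|n_k|^\mu|n_l|^{\nu(0)}$ (using the zero-sum constraint to ensure two indices with $|n|\gtrsim N$), pass to a physical-space integral via Plancherel, and close with H\"older followed by Sobolev embedding; your introduction of $v$ with $\hat v=|\hat u|$ is the same reduction the paper makes by declaring $m,\hat u\ge0$ WLOG. Your parenthetical about $\nu(0)<0$ is slightly circular (reducing to $\nu(0)=0$ still requires $s>1/2$, which $s>\nu(0)+1/2$ does not furnish when $\nu(0)<0$), but this edge case never arises in the paper, which only invokes the lemma with $\nu(0)\ge0$.
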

\begin{proof}
Without loss of generality we assume that $m$ and $\hat u$ take values in $\R_{\ge0}$. Let $(n_1,\dots,n_p) \in \Z^p$, $|n_j| \ge 3$, and without loos of generality assume $|n_1| = N$. Then $|\sum_{k=2}^p n_k| = N$, so we can further assume $|n_2| \gtrsim N$. In general we can find $|n_j| = N$, $|n_k| \gtrsim N$ and $|n_l| = n$, so
\[
m(n_1,\dots,n_p) \lesssim N^{2\mu}n^{\nu(0)} \lesssim
\sum_{1\le j<k\le p} \sum_{l=1\atop l\neq j,k}^p |n_j|^\mu|n_k|^\mu|n_l|^{\nu(0)}.
\]
When $u_1 = \cdots = u_p = u$, by symmetry we have
\begin{align*}
|M(u,\dots,u)|&\lesssim \sum_{n_1+\cdots+n_p=0,|n_j|\ge3} |n_1|^\mu|n_2|^\mu|n_3|^{\nu(0)}\hat u(n_1)\cdots\hat u(n_p)\\
&= \int_0^{2\pi} (|\nabla|^\mu u(x))^2(|\nabla|^{\nu(0)}u(x))u(x)^{p-3}\\
&\le \||\nabla|^\mu u\|_{L^2}^2\||\nabla|^{\nu(0)}u\|_{L^\infty}\|u\|_{L^\infty}^{p-3}\\
&\lesssim_s \|u\|_{H^\mu}^2\|u\|_{H^s}^{p-2}
\end{align*}
thanks to Sobolev embedding and the assumption that $s > \nu(0) + 1/2$.
\end{proof}

\subsection{The normal form transformation}
In this section we describe the normal form transformation for multilinear forms that will be used frequently later. For $M \in M_p^{\mu,\nu}$ with
\[
M(u_1,\dots,u_p) = \sum_{n_1+\cdots+n_p=0,|n_j|\ge3} m(n_1,\dots,n_p)\hat u_1(n_1)\cdots\hat u_p(n_p)
\]
we define
\[
L(M)(u_1,\dots,u_p) = \sum_{n_1+\cdots+n_p=0,|n_j|\ge3}
\frac{m(n_1,\dots,n_p)}{\lambda(n_1) + \cdots + \lambda(n_p)}\hat u_1(n_1)\cdots\hat u_p(n_p).
\]
Since the multiplier of $\partial_\alpha S$ is $i\lambda$, we have that
\[
\sum_{j=1}^p L(M)(u_1,\dots,-\partial_\alpha Su_j,\dots,u_p) = -iM(u_1,\dots,u_p)
\]
so
\[
\frac{d}{dt}iL(M)(f,\dots,f) = M(f,\dots,f) + \sum_{j=1}^p iL(M)(f,\dots,\underbrace{2f_\alpha Sf - fSf_\alpha}_{j\text{-th entry}},f).
\]

\begin{lem}\label{normal-form-35} Let $M\in M_{p}^{\mu,\nu}$. Then, for $p=3$, $5$, $$M \in M_{p(\pm)}^{\mu,\nu} \implies L(M) \in M_{p(\mp)}^{\mu,\nu}.$$
\end{lem}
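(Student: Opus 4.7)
The plan is to unwind the definition of $L(M)$ and verify the four requirements for membership in $M_{p(\mp)}^{\mu,\nu}$: the symmetry, the pointwise bound (i), the finite-difference bound (ii), and well-definedness of the quotient. The last two are the substantive ones.

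First, the symmetry is immediate: since $\lambda(-n) = -\lambda(n)$, the denominator $\lambda(n_1) + \cdots + \lambda(n_p)$ is an odd function of $(n_1, \dots, n_p)$. So if $m$ is even (resp.\ odd) under global sign reversal, then $m/(\sum \lambda)$ is odd (resp.\ even), i.e.\ $L(M) \in M_{p(\mp)}^{\mu,\nu}$. Next, well-definedness and the $L^\infty$-type bound (i) of Definition \ref{mf} follow directly from Lemma \ref{denom-bound}(i), which says $|\lambda(n_1) + \cdots + \lambda(n_p)| \gtrsim 1$ uniformly for $p = 3, 5$ and $n_1 + \cdots + n_p = 0$; combined with $|m| \lesssim N^\mu n^{\nu(0)}$, this gives the same bound for the quotient.

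The main work is bound (ii). Write $G(n_1,\dots,n_p) = \sum_j \lambda(n_j)$ and $\tilde m = m/G$, and think of both as functions of a single discrete parameter $x$ via the substitution $n_j \mapsto n_j + x$, $n_k \mapsto n_k - x$. The strategy is to expand $\Delta_x^m \tilde m|_{x=0}$ using the discrete Leibniz identity $m = \tilde m \cdot G$ and an induction on $m$, exactly as for the usual quotient rule. The key geometric fact, which controls the price paid for each derivative of $G$, is that the assumption $n < \delta_m N$ with $|n_j|, |n_k| > n$ forces both $|n_j|$ and $|n_k|$ to be comparable to $N$. Indeed, WLOG $|n_j| = N$; then the constraint $n_1 + \cdots + n_p = 0$ yields $|n_j + n_k| \le (p-2)n \le (p-2)\delta_m N$, so for $\delta_m$ small enough we get $|n_k| \ge N/2$. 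In particular $\min(|n_j|, |n_k|) \gtrsim N$.

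With this, finite differences of $\lambda$ in the perturbed variables are controlled by the smoothness of $\lambda(n) = \sgn(n)(1 + 3/(n^2-4))$ at $|n| \gtrsim N$: on the relevant range the sign is constant, so $|\Delta_x^b \lambda(n_j + x)|_{x=0}| \lesssim N^{-b-2}$ for $b \ge 1$ (and analogously at $n_k$), while $|G| \gtrsim 1$ from Lemma \ref{denom-bound}(i). Induction on $m$ using
\[
G \cdot \Delta_x^m \tilde m|_{x=0} = \Delta_x^m m|_{x=0} - \sum_{j=0}^{m-1} \binom{m}{j}\Delta_x^j \tilde m|_{x=0} \cdot \Delta_x^{m-j} G|_{x=0}
\]
combines the bound $|\Delta_x^m m| \lesssim N^{\mu-m} n^{\nu(m)}$ with $|\Delta_x^j \tilde m| \lesssim N^{\mu-j} n^{\nu(j)}$ and $|\Delta_x^{m-j} G| \lesssim N^{-(m-j)-2}$, producing $|\Delta_x^m \tilde m| \lesssim N^{\mu-m} n^{\nu(m)}$ since $\nu$ is increasing and the extra factor $N^{-2}$ is absorbed. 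The only place where the restriction $p = 3, 5$ enters is the lower bound on $G$ from Lemma \ref{denom-bound}(i); the rest of the argument is structural. The main obstacle is this bookkeeping of $N$- versus $n$-weights together with the quotient-rule induction, and the geometric observation that the two perturbed frequencies are necessarily near-antipodal of size $\sim N$ is what makes the weights close properly.
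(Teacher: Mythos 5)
Your proof is correct and follows essentially the same approach as the paper: both handle condition~(i) and the parity via the oddness of $\lambda$ and Lemma~\ref{denom-bound}(i), and both handle condition~(ii) by a discrete Leibniz expansion of the quotient, using that the two differentiated frequencies $n_j, n_k$ are comparable to $N$ and near-antipodal, the lower bound $|G| \gtrsim 1$, and the decay $|\Delta^b \lambda| \lesssim N^{-b-2}$. The only (cosmetic) differences are organizational: the paper expands $\Delta_x^m\bigl(A(x)B(x)^{-1}\bigr)$ directly, bounding $\Delta_x^l B^{-1}$ via the fundamental theorem of calculus, whereas you solve the product-rule identity for $\Delta_x^m \tilde m$ and induct on $m$; also, your displayed discrete Leibniz identity has the wrong evaluation points (the prefactor $G$ should be $G|_{x=m}$ and the last factor should be $\Delta_x^{m-j}G|_{x=j}$, not $|_{x=0}$), but since these shifts stay in a bounded range they do not affect any of the estimates.
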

\begin{proof}
To check condition (i) in definition \ref{mf}, we use Lemma \ref{denom-bound} (i) to deduce that
\[
\left|\frac{m(n_1,\dots,n_p)}{\lambda(n_1) + \cdots + \lambda(n_p)}\right|
\lesssim |m(n_1,\dots,n_p)| \lesssim N^\mu.
\]

Now we check condition (ii) in definition \ref{mf}. Assume $m \in \N^+$, $n < \delta_m'N$ for some $\delta_m' > 0$ depending on $\delta_1,\dots,\delta_m$, and $n < |n_j|$, $|n_k|$. Let
\begin{align*}
A(x) &= m(n_1,\dots,n_j + x,\dots,n_k - x,\dots,n_p),\\
B(x) &= \lambda(n_1) + \cdots + \lambda(n_j + x) + \cdots + \lambda(n_k - x) + \cdots + \lambda(n_p).
\end{align*}
Then by binomial expansion of the difference,
\[
\Delta_x^m|_{x=0} \frac{A(x)}{B(x)}
= \sum_{l=0}^m {m \choose l} \Delta_x^{m-l}|_{x=l} A(x) \Delta_x^l|_{x=0} \frac{1}{B(x)}.
\]
If $\delta_m'$ is small enough, then for $x \in [0, m]$ and $i = 1, \dots, m$ we have $\delta_i|n_j + x|$ and $\delta_i|n_k - x| > n$.
Since $M \in M_p^{\mu,\nu}$,
\[
|\Delta_x^{m-l}|_{x=l} A(x)| \lesssim_m N^{\mu-m+l}n^{\nu(m-l)}.
\]
By the fundamental theorem of calculus,
\[
|\Delta_x^l|_{x=0} B(x)^{-1}| \le \sup_{x\in[0,l]} |(B(x)^{-1})^{(l)}|.
\]
When $x \in [0, m]$, $n_j + x$ and $n_k - x \gtrsim N$, so by Lemma \ref{denom-bound} (i), $|B(x)| \gtrsim 1$ and for $l \ge 1$, $|B^{(l)}(x)| \lesssim_l N^{-l-2}$. The same bound holds for the left-hand side, so
\[
\left| \Delta_x^m|_{x=0} \frac{A(x)}{B(x)} \right|
\lesssim_m \sum_{l=0}^m N^{\mu-m+l}n^{\nu(m-l)} N^{-l}
\lesssim_m N^{\mu-m}n^{\nu(m)}.
\]
Since $\lambda$ is an odd function, division by $\lambda(n_1) + \cdots + \lambda(n_p)$ flips the parity.
\end{proof}

When the number of variables, $p$, is even, for $M \in M_p^{\mu,\nu}$ we also define
\[
P(M)(u_1,\dots,u_p) = \sum_{(n_1,\dots,n_p)\in D_p,|n_j|\ge3}
m(n_1,\dots,n_p)\hat u_1(n_1)\cdots\hat u_p(n_p),
\]
where the summation is over the set of totally degenerate tuples
\begin{align*}
D_p = \{(n_1,\dots,n_p): &\exists\text{ a fixed-point free involution }\sigma \in S_p\text{ such that }\\
&n_{\sigma(j)} = -n_j, j=1, \dots, p\}.
\end{align*}

\begin{lem}\label{normal-form-4}Let $M\in M_{p}^{\mu,\nu}$. Then

\begin{enumerate}[(i)]

\item $P: M_{p(\pm)}^{\mu,\nu} \to M_{p(\pm)}^{\mu,\nu}$ is a linear projection.

\item  $M \in M_{p-}^{\mu,\nu} \implies P(M)(f,\dots,f) = 0$.

\item $M \in M_{4(\pm)}^{\mu,\nu} \cap \ker P \implies L(M) \in M_{4(\mp)}^{\mu,\nu'}$, $\nu'(m) = \max_{l=0}^m (\nu(m - l) + 4l + 4)$.

\end{enumerate}

\end{lem}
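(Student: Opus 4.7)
\textbf{Plan for the proof of Lemma \ref{normal-form-4}.}

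For (i), I would observe that $P$ acts on multipliers as multiplication by the indicator $\chi_{D_p}$ of the set of totally degenerate tuples. Linearity and $P^2=P$ are immediate from $\chi_{D_p}^2=\chi_{D_p}$. Condition (i) of Definition \ref{mf} is inherited trivially from $|m\chi_{D_p}|\le|m|$. For Condition (ii), the key observation is that in the regime $n<|n_j|,|n_k|$, the indices $j,k$ correspond to the two indices of maximal modulus. For tuples in $D_p$, since magnitudes come in $\pm$ pairs, the pairing involution $\sigma$ is forced to pair $j$ with $k$; hence the perturbation $(n_j,n_k)\mapsto(n_j+x,n_k-x)$ preserves $D_p$-membership. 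A symmetric argument shows that if the original tuple lies outside $D_p$, then small perturbations remain outside. Thus $\chi_{D_p}$ is locally constant along the perturbation path, so the discrete-difference bound is inherited from $M$. Parity preservation holds because $D_p$ is invariant under global negation.

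For (ii), I would symmetrize the multiplier over $S_p$: set $\tilde m(n_1,\dots,n_p)=\frac{1}{p!}\sum_{\pi\in S_p}m(n_{\pi(1)},\dots,n_{\pi(p)})$. Since all $u_j=f$ are identical, $P(M)(f,\dots,f)=\sum_{D_p}\tilde m(n_1,\dots,n_p)\hat f(n_1)\cdots\hat f(n_p)$. From $M\in M_{p-}^{\mu,\nu}$ the symmetrization $\tilde m$ is odd under global negation. On the other hand, for every $(n_1,\dots,n_p)\in D_p$ the pairing involution $\sigma$ realizes the negation map $(n_j)\mapsto(-n_j)$ as a permutation of the arguments, and by $S_p$-symmetry of $\tilde m$ this gives $\tilde m(-n_1,\dots,-n_p)=\tilde m(n_1,\dots,n_p)$ on $D_p$. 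Combining the two identities forces $\tilde m\equiv0$ on $D_p$, and the sum collapses.

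For (iii), I first observe that on $D_4$ both $B(x):=\lambda(n_1)+\cdots+\lambda(n_4)$ (with $n_j\to n_j+x$, $n_k\to n_k-x$) and $A(x):=m(n_1,\dots,n_j+x,\dots,n_k-x,\dots,n_4)$ vanish at $x=0$ (the latter because $M\in\ker P$), so $L(M)$ is well-defined by setting it to $0$ on $D_4$. Off $D_4$, Lemma \ref{denom-bound}~(ii) yields $|B|\gtrsim(\min_i|n_i|)^{-4}\ge n^{-4}$, giving Condition~(i) of Definition~\ref{mf} with $\nu'(0)=\nu(0)+4$. For Condition~(ii), I expand via the discrete Leibniz rule
\[
\Delta_x^m|_{x=0}\frac{A(x)}{B(x)}=\sum_{l=0}^m\binom{m}{l}\,\Delta_x^{m-l}|_{x=l}A(x)\cdot\Delta_x^l|_{x=0}\frac{1}{B(x)}.
\]
As in the proof of Lemma \ref{normal-form-35}, I estimate $|B^{(r)}(x)|\lesssim N^{-r-2}$ for $r\ge1$ and combine with the uniform lower bound $|B(x)|\gtrsim n^{-4}$ along the perturbation path, which avoids $D_4$ by the same argument as in (i). Faà di Bruno applied to $1/B$ gives a sum of products of $B^{(r_i)}/B^{r+1}$; factoring $N^{-l}$ and summing a geometric series in $n^4/N^2$ yields $|\Delta_x^l(1/B)|\lesssim N^{-l}n^{4l+4}$. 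Multiplying with $|\Delta_x^{m-l}A|\lesssim N^{\mu-m+l}n^{\nu(m-l)}$ and taking the maximum over $l$ produces the exponent $\nu'(m)=\max_{l=0}^m(\nu(m-l)+4l+4)$. Finally, parity flips because $B$ is odd under global negation of its arguments, so division by $B$ interchanges $M_{4+}$ and $M_{4-}$.

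The main obstacle is in (iii): establishing the bound $|\Delta_x^l(1/B)|\lesssim N^{-l}n^{4l+4}$ uniformly along the perturbation path. This requires both a careful Faà di Bruno partition analysis to pin down the exponent $4l+4$ (rather than a worse power coming from high-order partitions), and the verification that the perturbation of $n_j,n_k$ stays away from $D_4$ so that the lower bound $|B|\gtrsim n^{-4}$ from Lemma \ref{denom-bound}~(ii) is available along the full integer interval $[0,m]$.
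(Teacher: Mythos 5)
Your proposal is correct and follows essentially the same route as the paper in all three parts: for (i) you observe that in the regime $n<|n_j|,|n_k|$ and $n<\delta_m N$ the involution must pair $j$ with $k$, so $\chi_{D_p}$ is constant along the perturbation $(n_j,n_k)\mapsto(n_j+x,n_k-x)$ and the difference bound is inherited; for (iii) you combine the binomial expansion of $A/B$ with $|B|\gtrsim n^{-4}$ from Lemma~\ref{denom-bound}(ii) and $|B^{(r)}|\lesssim N^{-r-2}$, exactly as the paper does. The only cosmetic difference is in (ii): the paper pairs the terms at $(n_1,\dots,n_p)$ and $(-n_1,\dots,-n_p)$ and shows they cancel using the degenerate involution $\sigma$ together with the odd parity of $m$, whereas you first $S_p$-symmetrize $m$ to $\tilde m$ and then show $\tilde m\equiv0$ on $D_p$; these are the same cancellation seen from two angles, and both are valid.
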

\begin{proof}
(i) We first check that $M\in M_p^{\mu,\nu}\implies P(M)\in M_p^{\mu,\nu}$.
Since the sum is restricted to $D_p$, the bound on $m$ is trivial.
To show the bound on the derivatives of $m$, note that if $(n_1,\dots,n_p) \in D_p$, $\delta_m < 1$, $n < \delta_mN$ and $n < |n_j|$, $|n_k|$,
then $\sigma$ swaps $j$ and $k$, which implies that $n_k = -n_j$.
Moverover, for all $x \in \N$ we have $n_k - x = -(n_j + x)$,
so $(n_1,\dots,n_j + x,\dots,n_k - x,\dots,n_p) \in D_p$,
and the iterated differences in $x$ are unchanged.
Hence the bound on the iterated differences of $m$ remains true.
The persistence of parity is trivial.

(ii) Since $D_p$ is invariant under the map $(n_1,\dots,n_p) \mapsto (-n_1,\dots,-n_p)$,
\begin{align*}
P(M)(f,\dots,f) = \frac{1}{2}\sum_{(n_1,\dots,n_p)\in D_p,|n_j|\ge3}
&(m(n_1,\dots,n_p)\hat f(n_1)\cdots\hat f(n_p)\\
+ &m(-n_1,\dots,-n_p)\hat f(-n_1)\cdots\hat f(-n_p)).
\end{align*}
Since $M \in M_{p-}^{\mu,\nu}$,
\[
m(-n_1,\dots,-n_p) = -m(n_1,\dots,n_p).
\]
Since $(n_1,\dots,n_p)\in D_p$, there is $\sigma \in S_p$ such that
$n_{\sigma(j)} = -n_j$, so
\[
\hat f(n_1)\cdots\hat f(n_p) = \hat f(n_{\sigma(1)})\cdots\hat f(n_{\sigma(p)}) = \hat f(-n_1)\cdots\hat f(-n_p).
\]
Hence the two terms in the sum cancel, so the whole sum vanishes.

(iii) By Lemma \ref{denom-bound} (ii), for nondegenerate tuples $(n_1,\dots,n_4)$,
\[
\left| \frac{m(n_1,\dots,n_4)}{\lambda(n_1) + \cdots + \lambda(n_4)} \right|
\lesssim m(n_1,\dots,n_4)\min(|n_1|,\dots,|n_4|)^4.
\]
Now we assume $m \in \N^+$, $n < \delta_m'N$ for some $\delta_m' > 0$ depending on $\delta_1,\dots,\delta_m$, and $n < |n_j|$, $|n_k|$.
Define $A(x)$, $B(x)$ and expand the difference of $A(x)/B(x)$ binomially as before. Then
\[
\Delta_x^{m-l}|_{x=l} A(x)\lesssim_m N^{\mu-m+l}n^{\nu(m-l)}\text{ and }
|\Delta_x^l|_{x=0} B(x)^{-1}| \le \sup_{x\in[0,l]} |(B(x)^{-1})^{(l)}|
\]
as before. Since $|B(x)| \gtrsim n^{-4}$ and for $l \ge 1$, $|B^{(l)}(x)| \lesssim_l N^{-l-2}$, it follows that
\[
\left| \Delta_x^m|_{x=0} \frac{A(x)}{B(x)} \right|
\lesssim_m \sum_{m=0}^l N^{\mu-m+l}n^{\nu(m-l)} N^{-l}n^{4l+4}
\lesssim_m N^{\mu-m}n^{\nu'(m)}
\]
if we let $\nu'(m) = \max_{l=0}^m (\nu(m - l) + 4l + 4)$.
\end{proof}

We also need operators on multilinear forms to track the nonlinearity.
For $M \in M_p^{\mu,\nu}$ we define
\begin{align*}
N_1(M)(u_1,\dots,u_{p+1}) &= \sum_{j=1}^p M(u_1,\dots,u_j\partial_\alpha Su_{j+1},\dots,u_{p+1}),\\
N_2(M)(u_1,\dots,u_{p+1})&=\frac{1}{(p+1)!}\sum_{\sigma\in S_{p+1}}
\sum_{j=1}^p M(u_{\sigma(1)},\dots,\partial_\alpha u_{\sigma(j)}Su_{\sigma(j+1)},\dots,u_{\sigma(p+1)}).
\end{align*}

\begin{lem}\label{non-lin-bound} Let $M\in M_{p}^{\mu,\nu}$. Then
\begin{enumerate}[(a)]
\item $M \in M_{p(\pm)}^{\mu,\nu} \implies N_1(M) \in M_{p+1(\mp)}^{\mu,\nu}.$
\item $N_2(M) \in M_{p+1(\mp)}^{\mu,\nu'}$, where $\nu'(m) = \nu(m + 1) + 1$.
\end{enumerate}
\end{lem}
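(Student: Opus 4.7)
My plan is to compute the Fourier multipliers of $N_1(M)$ and $N_2(M)$ explicitly and then verify the two conditions of Definition~\ref{mf} directly. Unfolding the product structure in Fourier gives
\[
m_1(n_1,\dots,n_{p+1})=\sum_{j=1}^{p} i\lambda(n_{j+1})\,m\bigl(n_1,\dots,n_{j}+n_{j+1},\dots,n_{p+1}\bigr),
\]
\[
m_2(n_1,\dots,n_{p+1})=\frac{1}{(p+1)!}\sum_{\sigma\in S_{p+1}}\sum_{j=1}^{p} i n_{\sigma(j)}\,s(n_{\sigma(j+1)})\,m\bigl(\dots,n_{\sigma(j)}+n_{\sigma(j+1)},\dots\bigr),
\]
where $s$ denotes the (even) Fourier multiplier of $S$ with $|s(n)|\lesssim 1/|n|$ and $\lambda$ is odd; in each expression the merged value occupies slot $j$ of the $p$-tuple fed into $m$. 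Throughout, I write $N$, $n$ for the largest and third largest of $|n_1|,\dots,|n_{p+1}|$, and $N_*$, $n_*$ for the same data of the merged $p$-tuple.

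For part (a), since $|\lambda|\le 2$, condition (i) of Definition~\ref{mf} reduces to $N_*\le 2N$ (triangle inequality on $|n_j+n_{j+1}|$) together with $n_*\lesssim n$; the latter I would prove by a short case analysis on how $\{j,j+1\}$ intersects the two top positions of the full tuple, invoking the zero-sum constraint in the case in which both lie in $\{j,j+1\}$ (where $|n_j+n_{j+1}|=|\sum_{l\neq j,j+1}n_l|\lesssim n$). For condition (ii) I would split the iterated difference acting on slots $a$, $b$ by $|\{a,b\}\cap\{j,j+1\}|\in\{0,1,2\}$: in the first two cases the difference transfers directly onto two admissible slots of $m$, and the iterated-difference bound on $m$ gives the claim; in the third case the $\pm x$ shifts are absorbed entirely into the sum $n_j+n_{j+1}$, the $j$-th summand becomes $x$-independent and $\Delta_x^m|_{x=0}$ vanishes. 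Parity flips because $\lambda$ is odd.

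For part (b), the extra prefactor $n_{\sigma(j)}s(n_{\sigma(j+1)})$, of size $|n_{\sigma(j)}|/|n_{\sigma(j+1)}|$, threatens to cost a full power of $N$, so a term-by-term analysis overshoots by a factor of $N$. The saving I would exploit comes from combining the symmetrization with the zero-sum constraint. Grouping the sum by the unordered pair $\{a,b\}=\{\sigma(j),\sigma(j+1)\}$, each pair contributes a coefficient $C_{a,b}=n_a s(n_b)+n_b s(n_a)$ multiplying a symmetrized value of $m$ at the merged tuple. The sums $\sum_b C_{a,b}$ simplify using $\sum_b n_b=-n_a$ and the evenness of $s$, eliminating the worst contributions; while the difference between $m$ at the merged tuple and at a reference tuple, obtained by shifting $n_{\sigma(j)}+n_{\sigma(j+1)}$ back to $n_{\sigma(j)}$ and compensating by $-n_{\sigma(j+1)}$ on another large slot, is controlled by Definition~\ref{mf}(ii) with order one --- so that the dangerous $1/|n_{\sigma(j+1)}|$ denominator cancels against the shift size $|n_{\sigma(j+1)}|$. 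The surviving factor $n_{\sigma(j)}$ is then absorbed into the $n$-exponent, raising $\nu(0)$ to $\nu(1)+1$ and giving the bound $N^\mu n^{\nu'(0)}$; iterating the same rewriting for higher differences yields condition (ii) with $\nu'(m)=\nu(m+1)+1$. Parity flips because $n_{\sigma(j)} s(n_{\sigma(j+1)})$ is odd under simultaneous sign reversal.

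\textbf{Main obstacle.} The crux is part (b): naive per-term bounds are off by a factor of $N$, and recovering the correct exponent requires the coordinated use of the symmetrization over $\sigma$, the zero-sum relation $\sum n_k=0$, and a finite-difference rewriting of $m$. Arranging the compensating shift so that it always lands on a slot of size comparable to $N$ (where the iterated-difference hypothesis of Definition~\ref{mf}(ii) applies), and verifying that the construction iterates cleanly through the higher-order differences, is where the most delicate bookkeeping of the proof lies.
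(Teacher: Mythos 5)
Your overall plan (read off the multipliers $m_1$, $m_2$ explicitly and check the two conditions of Definition~\ref{mf} by case analysis on where the shifted slots sit relative to the merged pair $\{j,j+1\}$) is the same as the paper's, and your handling of condition~(i) in both parts is essentially correct. However there are two genuine errors in part (a). In your third case, $\{a,b\}=\{j,j+1\}$, you claim the $j$-th summand becomes $x$-independent and $\Delta_x^m|_{x=0}$ vanishes, but the summand is $m(n_1,\dots,n_j+n_{j+1},\dots)\,i\lambda(n_{j+1})$, and under $n_j\mapsto n_j+x$, $n_{j+1}\mapsto n_{j+1}-x$ only the merged argument of $m$ is invariant; the factor $\lambda(n_{j+1}-x)$ is certainly $x$-dependent, so the difference does \emph{not} vanish. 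The correct argument (paper's Case~2) is that $|\Delta_x^m\lambda(n_{j+1}\mp x)|\lesssim_m |n_{j+1}|^{-m-2}\lesssim N^{-m-2}$, which combined with $|m|\lesssim N^\mu n^{\nu(0)}$ gives the bound. Likewise in your case $|\{a,b\}\cap\{j,j+1\}|=1$: if the index hit is $j+1$ (not $j$), the shift touches both the merged slot of $m$ and $\lambda(n_{j+1})$, so the difference does \emph{not} transfer cleanly onto $m$; you need the Leibniz/binomial expansion that the paper performs in its Case~3. Only when the intersection is empty, or equals $\{j\}$ alone, does the difference act purely on $m$.

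For part (b) your grouping is genuinely different from the paper's. You pair the two ordered terms belonging to the unordered pair $\{a,b\}=\{\sigma(j),\sigma(j+1)\}$ to form $C_{a,b}=n_a s(n_b)+n_b s(n_a)$; the paper instead fixes $\sigma(j+1)\ge3$ and pairs $\sigma(j)=1$ with $\sigma(j)=2$, writing the combination as $s(n_{\sigma(j+1)})\bigl(B(n_1+n_2)+(A-B)n_1\bigr)$ with $|n_1+n_2|\lesssim n$ and $A-B$ controlled by the order-one difference hypothesis. Your plan to control $\sum_b C_{a,b}$ via $\sum_b n_b=-n_a$ and evenness of $s$ does not close as stated, because the coefficient multiplies $m(\text{merged tuple for }\{a,b\})$, which itself depends on $b$, so the sum over $b$ cannot be pulled through; and for fixed $a\in\{1,2\}$ and small $b$ the single coefficient $C_{a,b}$ is genuinely of size $N/|n_b|$ with no internal cancellation. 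Your subsequent ``merged minus reference'' step, where the reference unmerges $n_a+n_b$ and recompensates on the other large slot, is essentially the paper's $A-B$ comparison in disguise and does give the $1/|n_b|$ versus shift-size cancellation; but without also recovering the $|n_1+n_2|\lesssim n$ gain on the reference term you are still a factor of $N$ off. In short, the idea of exploiting the zero-sum constraint plus the first-difference hypothesis is the right one, but the pairing you chose does not make the cancellation manifest, and the cases in part (a) involving the $\lambda$ factor are handled incorrectly.
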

\begin{proof}
(a) $N_1(M)$.
\begin{align*}
N_1(M)(u_1,\dots,u_{p+1}) &= \sum_{n_1+\cdots+n_{p+1}=0,|n_j|\ge3}
\sum_{j=1}^p &m(n_1,\dots,n_j+n_{j+1},\dots,n_{p+1})\\
\times &i\lambda(n_{j+1})\hat u(n_1)\cdots\hat u(n_{p+1}).
\end{align*}
Let $N$ (resp. $N'$) be the largest among $|n_1|,\dots,|n_{p+1}|$ (resp. $|n_1|,\dots,|n_j+n_{j+1}|,\dots,|n_p|$), and $n$ (resp. $n'$) is the third largest. Then $N' \lesssim N$, $n' \lesssim n$. Condition (i) follows from the bound
\[
|m(n_1,\dots,n_j+n_{j+1},\dots,n_{p+1})\lambda(n_{j+1})|
\lesssim N'^\mu n'^{\nu(0)} \lesssim N^\mu n^{\nu(0)}.
\]

To check condition (ii) we assume $m \in \N^+$, $n < \delta_m'N$ for some $\delta_m' > 0$ depending on $\delta_1,\dots,\delta_m$, and distinguish several cases.

{\bf Case 1:} $|n_{j+1}| \le n$. Then the difference acts on the $m$ factor. Since $\lambda$ is bounded, if $\delta_m'$ is small enough then $|$the $m$-th difference$| \lesssim_m N^{\mu-m}n^{\nu(m)}$.

{\bf Case 2:} $n < |n_j|$, $|n_{j+1}|$. Then the difference acts on the $\lambda$ factor. If $\delta_m'$ is small enough, then $|\Delta^m\lambda(n_{j+1})| \lesssim_m |n_{j+1}|^{-m-2} \lesssim_m N^{-m-2}$, so $|$the $m$-th difference$| \lesssim_m N^{\mu-m-2}n^{\nu(0)} \le N^{\mu-m}n^{\nu(m)}$.

{\bf Case 3:} $|n_j| \le n < |n_{j+1}|$, $|n_k|$. Binomial expansion of the differnece gives
\begin{align*}
&\Delta_x^m|_{x=0}(m(n_1,\dots,n_j+n_{j+1} \pm x,\dots,n_k \mp x,\dots,n_{p+1})\lambda(n_{j+1} \pm x))\\
= &\sum_{l=0}^m {m \choose l}\Delta_x^{m-l}|_{x=l}m(n_1,\dots,n_j+n_{j+1} \pm x,\dots,n_k \mp x,\dots,n_{p+1})\\
\times &\Delta_x^l|_{x=0}\lambda(n_{j+1} \pm x).
\end{align*}
If $\delta_m'$ is small enough then $|$the first factor$| \lesssim_m N^{\mu-m+l}n^{\nu(m-l)}$ and, by Case 1, $|$the second factor$| \lesssim_l |n_{j+1}|^{-l} \lesssim_l N^{-l}$, so $|$the above$| \lesssim_m N^{\mu-m}n^{\nu(m)}$.

(b) $N_2(M)$.
\begin{align*}
&N_2(M)(u_1,\dots,u_{p+1})\\
= &\frac{1}{(p+1)!}\sum_{\sigma\in S_{p+1}}
\sum_{n_1+\cdots+n_{p+1}=0,|n_j|\ge3}
\sum_{j=1}^p m(n_{\sigma(1)},\dots,n_{\sigma(j)}+n_{\sigma(j+1)},\dots,n_{\sigma(p+1)})\\
\times &in_{\sigma(j)}s(n_{\sigma(j+1)})\hat u(n_1)\cdots\hat u(n_{p+1}),
\end{align*}
where $s(n) = \lambda(n)/n \sim 1/|n|$ as $|n| \to \infty$. Without loss of generality we assume $|n_1|\ge\cdots\ge|n_{p+1}|$.

We first check condition (i).
If $\sigma(j) \ge 3$ then $|n_{\sigma(j)}| \le |n_3| \le n$,
so
\[
|m(n_{\sigma(1)},\dots,n_{\sigma(j)}+n_{\sigma(j+1)},\dots,n_{\sigma(p+1)})n_{\sigma(j)}s(n_{\sigma(j+1)})| \lesssim N^\mu n^{\nu(0)+1}.
\]
If $\{\sigma(j), \sigma(j + 1)\} = \{1, 2\}$ then $|n_{\sigma(j)}s(n_{\sigma(j+1)})| \lesssim |n_{\sigma(j)}/n_{\sigma(j+1)}|\le p$, so
\[
|m(n_{\sigma(1)},\dots,n_{\sigma(j)}+n_{\sigma(j+1)},\dots,n_{\sigma(p+1)})n_{\sigma(j)}s(n_{\sigma(j+1)})| \lesssim N^\mu n^{\nu(0)}.
\]

Now we assume $\sigma(j) \le 2$ and $\sigma(j + 1) > 3$. If $n \ge \delta_0'N$ (for some $\delta_0' > 0$ depending on $\delta_1$) then
\[
|m(n_{\sigma(1)},\dots,n_{\sigma(j)}+n_{\sigma(j+1)},\dots,n_{\sigma(p+1)})n_{\sigma(j)}s(n_{\sigma(j+1)})| \lesssim N^\mu n^{\nu(0)+1}/\delta_0'.
\]
Now we assume $n < \delta_0'N$. We pair the terms with $\sigma(j) = 1$ and the terms with $\sigma(j) = 2$ as follows:
\begin{equation}\label{quasi-lin-mult}
\begin{aligned}
&\sum_{j=1}^p \sum_{\sigma\in S_{p+1}\atop\sigma(j)\le2,\sigma(j+1)\ge3}
m(n_{\sigma(1)},\dots,n_{\sigma(j)}+n_{\sigma(j+1)},\dots,n_{\sigma(p+1)})
n_{\sigma(j)}s(n_{\sigma(j+1)})\\
= \frac{1}{2}&\sum_{j=1}^p \sum_{\sigma\in S_{p+1}\atop\sigma(j)\le2,\sigma(j+1)\ge3}
s(n_{\sigma(j+1)})(\underbrace{m(n_{\sigma(1)},\dots,n_1+n_{\sigma(j+1)},\dots,n_2,\dots,n_{\sigma(p+1)})}_A n_1\\
+ &\underbrace{m(n_{\sigma(1)},\dots,n_1,\dots,n_2 + n_{\sigma(j+1)},\dots,n_{\sigma(p+1)})}_B n_2).
\end{aligned}
\end{equation}
Both $|A|$ and $|B| \lesssim N^\mu n^{\nu(0)}$ while their difference is
\begin{equation}\label{A-B}
A - B = \sum_{l=0}^{n_{\sigma(j+1)}-1} \Delta_x|_{x=l}
m(n_{\sigma(1)},\dots,n_1 + x,\dots,n_2 + n_{\sigma(j+1)} - x,\dots,n_{\sigma(p+1)}).
\end{equation}
If $\delta_0'$ is small enough (depending on $\delta_1$),
$|$all summands$| \lesssim N^{\mu-1}n^{\nu(1)}$.
Since $\sigma(j + 1) \ge 3$, $|n_{\sigma(j+1)}| \le n$,
so $|A - B| \lesssim N^{\mu-1}n^{\nu(1)+1}$. Also note that
$|n_1 + n_2| = |n_3 + \cdots + n_{p+1}| < pn$.
Then the summands in (\ref{quasi-lin-mult}) are
\begin{align*}
s(n_{\sigma(j+1)})(An_1 + Bn_2)
&= s(n_{\sigma(j+1)})(B(n_1 + n_2) + (A - B)n_1)\\
\text{whose absolute value}
&\lesssim N^\mu n^{\nu(0)+1}+N^{\mu-1}n^{\nu(1)+1}N \lesssim N^\mu n^{\nu(1)+1}.
\end{align*}

Now we check condition (ii). Again assume $|n_1| \ge \cdots \ge |n_{p+1}|$,
$m \in \N^+$ and $n < \delta_m'N$ for some $\delta_m' > 0$ depending on $\delta_1,\dots,\delta_{m+1}$. From the ordering it follows that $|n_1| \ge |n_2| > n \ge |n_3|$. Again we distinguish several cases.

{\bf Case 1:} $\{\sigma(j), \sigma(j + 1)\} = \{1, 2\}$. Then the difference acts on the factor $n_{\sigma(j)}s(n_{\sigma(j+1)})$.
If $\delta_m'$ is small enough, then $|\Delta_x^m($this factor$)| \lesssim_m N^{-m}$, so $|$the $m$-th difference$| \lesssim_m N^{\mu-m}n^{\nu(0)} \le N^{\mu-m}n^{\nu(m)}$.

{\bf Case 2:} $\sigma(j)$, $\sigma(j + 1) \ge 3$ and $\sigma(k)$, $\sigma(l) \le 2$. Then the difference acts on the $m$ factor,
with a bound of $O_m(N^{\mu-m}n^{\nu(m)})$.
If $\delta_m'$ is small enough then $|n_{\sigma(j)}s(n_{\sigma(j+1)})| \lesssim |n_{\sigma(j)}/n_{\sigma(j+1)}|\le p$, so $|$the product$|\lesssim_m N^{\mu-m}n^{\nu(m)}$.

{\bf Case 3:} $\sigma(j) \ge 3$ and $\sigma(j + 1) \le 2$ (assumed to be 1 without loss of generality). Then
\begin{align*}
&\Delta_x^m|_{x=0}(m(n_{\sigma(1)},\dots,n_{\sigma(j)}+n_1 \pm x,\dots,n_2 \mp x,\dots,n_{\sigma(p+1)})n_{\sigma(j)}s(n_1 \mp x))\\
= &n_{\sigma(j)}\sum_{m=0}^l \Delta_x^{m-l}|_{x=l} m(n_{\sigma(1)},\dots,n_{\sigma(j)}+n_1 \pm x,\dots,n_2 \mp x,\dots,n_{\sigma(p+1)})\\
\times &\Delta_x^l|_{x=0}s(n_1 \mp x).
\end{align*}
The first factor is bounded by $N$. If $\delta_m'$ is small enough,
$|$the second one$| \lesssim_m N^{\mu-m+l}n^{\nu(m-l)} \le N^{\mu-m+l}n^{\nu(m)}$ and $|$the third one$| \lesssim_l |n_1|^{-l-1} \lesssim_l N^{-l-1}$, so $|$the above$| \lesssim_m N^{\mu-m}n^{\nu(m)}$.

{\bf Case 4:} $\sigma(j) \le 2$ and $\sigma(j + 1) \ge 3$.
As before we pair the terms
\begin{align*}
&s(n_{\sigma(j+1)})\Delta_x^m|_{x=0}(\underbrace{m(n_{\sigma(1)},\dots,n_1 \pm x+n_{\sigma(j+1)},\dots,n_2 \mp x,\dots,n_{\sigma(p+1)})}_A(n_1 \pm x)\\
& + \underbrace{m(n_{\sigma(1)},\dots,n_1 \pm x,n_{\sigma(j+2)},\dots,n_2 \mp x+n_{\sigma(j+1)},\dots,n_{\sigma(p+1)})}_B(n_2 \mp x))\\
=&s(n_{\sigma(j+1)})\Delta_x^m|_{x=0}(B(n_1 + n_2) + (A - B)(n_1 \pm x))\\
=&s(n_{\sigma(j+1)})((n_1 + n_2)\Delta_x^m|_{x=0}B + n_1\Delta_x^m|_{x=0}(A - B) \pm m\Delta_x^{m-1}|_{x=1}(A - B)).
\end{align*}
Then $s(n_{\sigma(j+1)})$ is bounded, $|n_1| \le N$ and $|n_1+n_2| < pn$.
If $\delta_m'$ is small enough then $|\Delta_x^m|_{x=0}B| \lesssim_m N^{\mu-m}n^{\nu(m)}$. Expanding $A-B$ into differences as in (\ref{A-B}) we know that $\Delta_x^{m-l}|_{x=l}(A - B) \lesssim_m N^{\mu-m+l-1}n^{\nu(m-l+1)+1}$. Hence $|$the $m$-th difference$| \lesssim_m N^{\mu-m}n^{\nu(m+1)+1}$. With that the proof is complete.
\end{proof}

\section{Long-time wellposedness}
Now we study the long-time wellposedness of the equation
\[
f_t = -Sf_\alpha + 2f_\alpha Sf - fSf_\alpha.
\]

First of all we notice that since $f_0$ is zero mean and with $m-$fold symmetry, for $m\geq 3$, i.e.,
$$f_0(\alpha)=\sum_{|n|\geq 1} \hat{f}_0(m n)e^{im n\alpha},$$
and the equation conserves the mean we also have that
$$f(\alpha,t)=\sum_{|n|\geq 1}\hat{f}(m n,t)e^{imn\alpha}.$$
Thus $\hat{f}(0,t)=\hat{f}(\pm 1,t)=\hat{f}(\pm 2,t)=0.$

\subsection{Energy estimates}
Define the energy
\[
E_s(f) = \frac{1}{2}\|f\|_{H^s}^2.
\]
Then
\begin{align*}
\frac{d}{dt}E_s(f) &= \langle\Lambda^sf_t, \Lambda^sf\rangle
= \langle\Lambda^s(2f_\alpha Sf - fSf_\alpha), \Lambda^sf\rangle\\
&= \langle-\Lambda^sSf_\alpha + 2Sf\Lambda^sf_\alpha + 2[\Lambda^s, Sf]f_\alpha - \Lambda^s(fSf_\alpha), \Lambda^sf\rangle.
\end{align*}
The first term vanishes because $S\partial_\alpha$ is anti-self-adjoint and commutes with $\Lambda$. The last term
\[
\langle\Lambda^s(fSf_\alpha), \Lambda^sf\rangle \in M_{3-}^{2s,0}
\]
because the multiplier of $S\partial_\alpha$ is odd and of order 0. The second term
\[
\langle Sf\Lambda^sf_\alpha, \Lambda^sf\rangle
= \frac{1}{2}\int Sf((\Lambda^sf)^2)_\alpha
= -\frac{1}{2}\int Sf_\alpha(\Lambda^sf)^2
\in M_{3-}^{2s,0}
\]
for the same reason. We show that the remaining term (the one with the commutator) is also in this class. The multiplier of this term is
\[
c(\langle n_3 \rangle^s - \langle n_2 \rangle^s)s(n_1)n_2\langle n_3 \rangle^s
\]
for some constant $c \in \C$. Since $s^{(m)}(x) \lesssim_m |x|^{-m-1}$ when $|x|$ is sufficiently large, we have the desired bound if $|n_1| > \delta N$ (Recall $N = \max(|n_1|, |n_2|, |n_3|)$.) for some $\delta > 0$.
If $|n_1| < \delta N$ and $\delta$ is small enough then
\[
\langle n_3 \rangle^s - \langle n_2 \rangle^s
= \langle n_1+n_2 \rangle^s - \langle n_2 \rangle^s
\lesssim_s n_1\langle n_2 \rangle^{s-1}
\]
and
\[
\Delta_x^m|_{x=0}(\langle n_3 \mp x \rangle^s - \langle n_2 \pm x \rangle^s)
\lesssim_{s,m} n_1\langle n_2 \rangle^{s-m-1}
\]
so the desired bound also holds, and the desired parity is easily seen.
Hence the evolution of the energy is
\begin{equation}\label{dt-Es}
\frac{d}{dt}E_s(f) = M_3(f,f,f)
\end{equation}
for some $M_3 \in M_{3-}^{2s,0}$.

\subsection{Iterated normal form transformations and proof of Theorem \ref{longexistence}}
Now we perform iterated normal form transformations on the equation (\ref{dt-Es}). By Lemma \ref{normal-form-35},
there is $M_3' = iL(M_3) \in M_{3+}^{2s,0}$ such that
\begin{align*}
\frac{d}{dt}M_3'(f,f,f)
&= M_3(f,f,f) + \sum_{j=1}^3 M_3'(f,\dots,\underbrace{2f_\alpha Sf - fSf_\alpha}_{j\text{-th entry}},\dots,f)\\
&= M_3(f,f,f) + (2N_2 - N_1)(M_3')(f,f,f,f).
\end{align*}
Then by Lemma \ref{non-lin-bound},
\[
\frac{d}{dt}(E_s(f) - M_3'(f,f,f)) = M_4(f,f,f,f)
\]
for some $M_4 \in M_{4-}^{2s,1}$. Before proceeding, we must isolate the resonance set from $M_4$. We decompose $M_4 = (M_4 - P(M_4)) + P(M_4)$,
where both parts are in $M_{4-}^{2s,1}$ by Lemma \ref{normal-form-4} (i).
By (ii), we can replace $M_4$ by $M_4 - P(M_4)$, so we assume $P(M_4) = 0$.
By (iii), there is $M_4' = iL(M_4) \in M_{4+}^{2s,4m+5}$ such that
\[
\frac{d}{dt}M_4'(f,f,f,f) = M_4(f,f,f,f) + (2N_2 - N_1)(M_4')(f,\dots,f).
\]
Then by Lemma \ref{non-lin-bound},
\[
\frac{d}{dt}(E_s(f) - M_3'(f,f,f) - M_4'(f,f,f,f)) = M_5(f,\dots,f)
\]
for some $M_5 \in M_{5-}^{2s,4m+10}$. By Lemma \ref{normal-form-35},
there is $M_5' = iL(M_5) \in M_{5+}^{2s,4m+10}$ such that
\[
\frac{d}{dt}M_5'(f,\dots,f) = M_5(f,\dots,f) + (2N_2 - N_1)(M_5')(f,\dots,f).
\]
Then by Lemma \ref{non-lin-bound},
\[
\frac{d}{dt}(E_s(f) - M_3'(f,f,f) - M_4'(f,f,f,f) - M_5'(f,\dots,f))
= M_6(f,\dots,f)
\]
for some $M_6 \in M_{6-}^{2s,4m+15}$. By Lemma \ref{mult-bound} we have, for $s \ge 16$,
\[
|M_j'(f,\dots,f)| \lesssim_s \|f\|_{H^{16}}^{j-2}\|f\|_{H^s}^2, j = 3, 4, 5
\text{ and }|M_6(f,\dots,f)| \lesssim_s \|f\|_{H^{16}}^4\|f\|_{H^s}^2.
\]
Hence there is $C = C(s) \ge 1$ such that
\begin{align}
\nonumber
\|f(t)\|_{H^s}^2
&\le \|f(0)\|_{H^s}^2 + C(\|f(0)\|_{H^{16}} + \|f(0)\|_{H^{16}}^3)\|f(0)\|_{H^s}^2\\
&+ C(\|f(t)\|_{H^{16}} + \|f(t)\|_{H^{16}}^3)\|f(t)\|_{H^s}^2
+ Ct\sup_{\tau\in[0,t]}\|f(\tau)\|_{H^{16}}^4\|f(\tau)\|_{H^s}^2.
\label{Hs-growth}
\end{align}
Thus from
\[
\|f(0)\|_{H^s} \le \ep\text{ and }\sup_{\tau\in[0,t]}\|f(\tau)\|_{H^s} \le 2\ep
\]
it follows that
\[
\sup_{\tau\in[0,t]}\|f(\tau)\|^2_{H^s} \le \ep^2 + C(9\ep + 33\ep^3)\ep^2 + 64Ct\ep^6.
\]
If $\ep \le 1/(9C)$ and $t \le 1/(64C\ep^4)$ then
\[
\sup_{\tau\in[0,t]}\|f(\tau)\|^2_{H^s} < 7\ep^2/2
\]
closing the estimate. Therefore the lifespan $\gtrsim_s 1/\ep^4$.
Thus, if the initial data is in $H^{16}$, with norm $\ep$,
then it will remain in $H^{16}$ for a period of length $\gtrsim 1/\ep^4$.
Moreover, the bound (\ref{Hs-growth}) implies that for $s \ge 16$,
the growth of $\|f\|_{H^s}$ only depends on $\|f\|_{H^{16}}$.
Hence it follows that if the initial data is in $H^s$ for $s \ge 16$,
and is sufficiently small in $H^{16}$, then it will remain in $H^s$ for a period of length $\gtrsim 1/\ep^4$, with an implicit constant independent of $s$, which is Theorem \ref{longexistence}.

\section{Analytic travelling waves}
In this section we study travelling wave solutions of the equation
\[
f_t = -Sf_\alpha + 2f_\alpha Sf - fSf_\alpha,
\]
that is, solutions of the form
\[
f(\alpha, t) = u(\alpha - vt),\quad v \in \R.
\]
For such solutions we have $f_t = -vu'$, $f_\alpha = u'$ and $Sf = Su$, so
\begin{equation}\label{u-eqn}
-Su' + vu' + 2u'Su - uSu' = 0.
\end{equation}
Clearly $u = 0$ is a solution. We will use the Crandall-Rabinowitz bifurcation theorem \cite{Crandall-Rabinowitz:bifurcation-simple-eigenvalues} to find other solutions bifurcating from the zero solution, and then show that they are analytic in $\alpha$.

\subsection{The bifurcation theorem}
%
%
%

\begin{thm}[Crandall--Rabinowitz]
Let $X$ and $Y$ be Banach spaces, $V$ a neighborhood of 0 in $X$, and
\begin{align*}
F: V \times (-1, 1) &\to Y\\
(u, \mu) &\mapsto F(u, \mu)
\end{align*}
satisfy
\begin{enumerate}[(i)]
\item $F(0, \mu) = 0$ for all $|\mu| < 1$;

\item The partial derivatives $\partial_\mu F$, $\partial_uF$ and $\partial_{u\mu}^2F$ exist and are continuous;

\item $\ker \partial_uF(0, 0) = \R u_0$ and $Y/\Im \partial_uF(0, 0)$ are one-dimensional;

\item $\partial_{u\mu}^2F(0, 0)u_0 \notin \Im \partial_uF(0, 0)$.
\end{enumerate}

Let $Z$ be a complement of $\ker \partial_uF(0, 0)$ in $X$.
Then there is a neighborhood $U$ of $(0, 0)$ in $\R \times X$,
a number $a > 0$, and continuous functions
\begin{align*}
\phi: (-a, a) &\to \R, & \psi: (-a, a) &\to Z
\end{align*}
such that $\phi(0) = 0$, $\psi(0) = 0$ and
\[
F^{-1}(0) \cap U = \{(\xi u_0 + \xi\psi(\xi), \phi(\xi)): |\xi| < a\} \cup ((\R \times \{0\}) \cap U).
\]
\end{thm}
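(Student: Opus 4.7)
The approach I would take is Lyapunov--Schmidt reduction, which is the standard route to bifurcation results of Crandall--Rabinowitz type. The idea is to split the abstract equation $F(u, \mu) = 0$ into a regular part that can be solved by the implicit function theorem, leaving behind a scalar bifurcation equation to which the implicit function theorem is applied a second time.

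First I would use assumption (iii) to fix decompositions $X = \R u_0 \oplus Z$ and $Y = Y_0 \oplus \Im\partial_u F(0, 0)$, where $Y_0$ is a one-dimensional complement of $\Im\partial_u F(0, 0)$, and let $Q : Y \to \Im\partial_u F(0, 0)$ be the associated continuous projection. Writing $u = \xi u_0 + z$ with $z \in Z$, the equation $F(u, \mu) = 0$ becomes the system
\begin{equation*}
QF(\xi u_0 + z, \mu) = 0, \qquad (I - Q)F(\xi u_0 + z, \mu) = 0.
\end{equation*}
At $(\xi, z, \mu) = (0, 0, 0)$ the derivative of the first equation with respect to $z$ equals $Q\partial_u F(0, 0)|_Z$, which is a linear isomorphism $Z \to \Im\partial_u F(0, 0)$ by the choice of $Z$. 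The implicit function theorem therefore yields a continuous $z = z(\xi, \mu)$ solving $QF = 0$ in a neighborhood of the origin, and uniqueness together with $F(0, \mu) = 0$ forces $z(0, \mu) = 0$.

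It then remains to solve the scalar equation $g(\xi, \mu) := \ell\bigl((I - Q)F(\xi u_0 + z(\xi, \mu), \mu)\bigr) = 0$, where $\ell$ is any continuous linear functional on $Y$ with $\ker \ell \supseteq \Im\partial_u F(0, 0)$ and $\ell|_{Y_0} \neq 0$. Since $F(0, \mu) \equiv 0$, we have $g(0, \mu) \equiv 0$, so $g$ factors as $g(\xi, \mu) = \xi h(\xi, \mu)$ with $h$ continuous. Differentiating $QF(\xi u_0 + z, \mu) = 0$ once in $\xi$ at the origin and using $\partial_u F(0, 0) u_0 = 0$ shows $\partial_\xi z(0, 0) = 0$, and then a direct chain-rule computation gives
\begin{equation*}
\partial_\mu h(0, 0) \;=\; \ell\bigl((I - Q)\partial_{u\mu}^2 F(0, 0) u_0\bigr),
\end{equation*}
which is nonzero precisely by the transversality hypothesis (iv). Another application of the implicit function theorem then produces a continuous $\phi(\xi)$ with $\phi(0) = 0$ solving $h(\xi, \phi(\xi)) = 0$, and setting $\psi(\xi) := z(\xi, \phi(\xi))/\xi$ (extended by continuity at $\xi = 0$ using $z(0, \mu) = \partial_\xi z(0, 0) = 0$) recovers the parametrization stated in the theorem. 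A separate short argument, based on uniqueness in each application of the implicit function theorem, shows that there are no further zeros of $F$ near $(0, 0)$.

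The main subtle point is the limited regularity in hypothesis (ii): only continuity of $\partial_\mu F$, $\partial_u F$ and $\partial_{u\mu}^2 F$ is assumed (not $C^1$ of $F$ itself), so both applications of the implicit function theorem and the extension of $\psi$ across $\xi = 0$ must be carried out in a Lipschitz/continuous framework rather than in the smooth one. Verifying that the factored function $h$ inherits enough regularity for the second implicit function theorem, and that $z(\xi, \phi(\xi))$ vanishes to first order in $\xi$ uniformly, is the technical heart of the argument; together with the Lyapunov--Schmidt reduction above it delivers the stated local description of $F^{-1}(0)$.
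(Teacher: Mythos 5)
The paper does not prove this theorem: it is quoted verbatim as the classical Crandall--Rabinowitz bifurcation theorem and used as a black box, with the proof deferred to the cited reference \cite{Crandall-Rabinowitz:bifurcation-simple-eigenvalues}. Your sketch is the standard Lyapunov--Schmidt argument from that original paper, and it is correct at the level of detail given: the splitting $X = \R u_0 \oplus Z$, $Y = Y_0 \oplus \Im\partial_uF(0,0)$, the first implicit function theorem producing $z(\xi,\mu)$ with $z(0,\mu)=0$ and $\partial_\xi z(0,0)=0$, the factorization $g(\xi,\mu)=\xi h(\xi,\mu)$ via the Hadamard/integral form of Taylor's theorem, the identification $\partial_\mu h(0,0)=\ell\bigl((I-Q)\partial^2_{u\mu}F(0,0)u_0\bigr)\neq 0$ using (iv), and the second implicit function theorem to solve $h(\xi,\phi(\xi))=0$ all track the classical proof. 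You also correctly flag the one genuinely delicate point, namely that hypothesis (ii) only supplies continuity of $\partial_uF$, $\partial_\mu F$ and $\partial^2_{u\mu}F$ rather than $C^1$ regularity of $F$ itself, so both applications of the implicit function theorem must be in the form that requires only continuous differentiability in the variable being solved for, and the factorization of $g$ and the extension $\psi(\xi)=z(\xi,\phi(\xi))/\xi$ across $\xi=0$ must be justified at that level. Since the paper supplies no proof to compare against, there is nothing further to reconcile.
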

%
%
%
%
%

\subsection{Bifurcation analysis and proof of Theorem \ref{thmtravelling}}
To apply the Crandall-Rabinowitz theorem \cite{Crandall-Rabinowitz:bifurcation-simple-eigenvalues}, we first find the linearization of the equation (\ref{u-eqn}) around the zero solution, which is
\begin{equation}\label{linearization}
-Su' + vu' = 0.
\end{equation}
Since the multiplier of $S\partial_\alpha$ is $i\lambda$, which is odd,
for $m \ge 3$ we have $S(\cos m\alpha)' = -\lambda(m)\sin m\alpha$ and $S(\sin m\alpha)' = \lambda(m)\cos m\alpha$. Thus $\cos m\alpha$ solves (\ref{linearization}) with $\lambda(m) - mv_m = 0$, i.e.,
$v_m = \lambda(m)/m$,
and $\sin m\alpha$ solves (\ref{linearization}) with the same $v_m$.
These are the only solutions of (\ref{linearization}).
Indeed, taking the Fourier transform of (\ref{linearization}) gives
\[
-i\lambda(m)\hat u(m) + imv\hat u(m) = 0
\]
so $\hat u(m) = 0$ unless $v = v_m$, which can only hold for a pair of opposite values of $m$. Since $u$ is real-valued, the two modes combines to give $\cos m\alpha$ or $\sin m\alpha$.

To perform bifurcation, fix an integer $k\ge1$. For $m\ge3$ and $c>0$ define the spaces
\begin{align*}
C^\omega_{m,c} &= \{2\pi/m\text{-periodic holomorphic functions in the strip $|\Im\alpha| < c$}\},\\
H^{k,\pm}_{m,c} &= \left\{ u \in C^\omega_c: \sup_{|y|<c} \|u(\cdot + iy)\|_{H^k(\R/2\pi\Z)} < \infty, \int_{\R/2\pi\Z} u(x + iy)dx = 0 \right.,\\
&u(-x + iy) = \pm u(x + iy),\ \forall x \in \R/2\pi\Z, |y| < c\}.
\end{align*}
with the norm (which is the same for both $H^{k,+}_{m,c}$ and $H^{k,-}_{m,c}$)
\[
\|u\|_{H^k_{m,c}} = \sup_{|y|<c} \|u(\cdot + iy)\|_{H^k(\R/2\pi\Z)}
\approx_k \|e^{c|n|}\langle n \rangle^k\|_{\ell^2_n}.
\]
Since $\partial_\alpha$ and $S$ act the same way on $u(\cdot + iy)$ for all $|y| < c$, we have that
\begin{align*}
\partial_\alpha: H^{k,+}_{m,c} &\to H^{k-1,-}_{m,c}, &
S: H^{k,\pm}_{m,c} &\to H^{k+1,\pm}_{m,c}.
\end{align*}
By the Sobolev multiplication theorem, parity considerations,
pointwise multiplication is bounded on
\begin{align*}
\times: H^{k,+}_{m,c} \times H^{k,-}_{m,c} &\to H^{k,-}_{m,c}, &
\times: H^{k-1,-}_{m,c} \times H^{k+1,+}_{m,c} &\to H^{k-1,-}_{m,c}.
\end{align*}
Hence the map
\begin{align*}
F_m: (u, \mu) &\mapsto -Su' + (v_m + \mu)u' + 2u'Su - uSu',\\
H^{k,+}_{m,c} \times \R &\mapsto H^{k-1,-}_{m,c}
\end{align*}
satisfies

\begin{enumerate}[(i)]

\item $F_m(0, \mu) = 0$ for all $\mu$;

\item The partial derivatives $\partial_\mu F_m(u, \mu) = u': H^{k,+}_{m,c} \times \R \to H^{k-1,-}_{m,c}$,
\begin{align*}
\partial_uF_m(u, \mu): w &\mapsto -Sw' + (v_m + \mu)w' + 2w'Su + 2u'Sw - wSu' - uSw',\\
\partial^2_{u\mu}F_m(u, \mu): w &\mapsto w',\\
H^{k,+}_{m,c} \times \R \times H^{k,+}_{m,c} &\to H^{k-1,-}_{m,c},
\end{align*}
all exist and are continuous.

\item $\partial_uF_m(0, 0)(w) = -Sw' + v_mw'$ so $\ker \partial_uF_m(0, 0) = \R\cos m\alpha$ and
\[
W_{m,c} := \Im \partial_uF_m(0, 0) = \{u \in H^{k-1,-}_{m,c}: \hat u(m) = 0\}
\]
has codimension one.

\item $\partial^2_{u\mu}F_m(0, 0)(\cos m\alpha) = -m\sin m\alpha \notin W_{m,c}$.

\end{enumerate}

Let
\[
Z_{m,c} = \{u \in H^{k,+}_{m,c}: \hat u(m) = 0\}
\]
be a complement of $\R\cos n\alpha$ in $H^{k,+}_{m,c}$. Then the Crandall--Rabinowitz theorem shows that there are a number $\ep_{m,c} > 0$,
an open interval $I_{m,c}$ containing 0 and continuous functions
\[
(\phi_{m,c}, \psi_{m,c}): I_{m,c} \to \R \times Z_{m,c}
\]
such that $\phi_{m,c}(0) = 0$, $\psi_{m,c}(0) = 0$ and
\[
F_m^{-1}(0) \cap B_{H_{m,c}^{k,+} \times \R}(\ep_{m,c}) = \{(\xi\cos m\alpha + \xi\psi_{m,c}(\xi), \phi_{m,c}(\xi)),\ \xi \in I_{m,c}\}.
\]
We can remove the dependence of $c$ as follows: Let $c' > c$.
Then the above produces an $\ep_{m,c'} > 0$.
Let $I$ be an open subinterval of $I_{m,c}$ such that
\[
(\xi\cos m\alpha + \xi\psi_{m,c}(\xi), \phi_{m,c}(\xi))
\in F_m^{-1}(0) \cap B_{H_{m,c}^{k,+} \times \R}(\ep_{m,c})
\cap B_{H_{m,c'}^{k,+} \times \R}(\ep_{m,c'})
\]
for all $\xi \in I$. Then for each $\xi \in I \backslash\{0\}$ we have that
\[
(\xi\cos m\alpha + \xi\psi_{m,c}(\xi), \phi_{m,c}(\xi))
= (\xi'\cos m\alpha + \xi'\psi_{m,c'}(\xi'), \phi_{m,c'}(\xi'))
\]
for some $\xi' \in I_{m,c'} \backslash \{0\}$. Since $H^{k,+}_{m,c} = Z_{m,c} \oplus \R\cos m\alpha$ is a direct sum, $\xi = \xi'$ and $\psi_{m,c}(\xi) = \psi_{m,c'}(\xi) \in Z_{n,c'}$ (here $\xi \neq 0$ is used).
We also have $\phi_{m,c}(\xi) = \phi_{m,c'}(\xi)$.
Hence we can omit $c$ from the subscripts of $\psi$ and $\phi$. Now let
\[
u_{m,\xi}(\alpha): = \xi\cos m\alpha + \xi\psi_m(\xi).
\]
Then
\[
f_{m,\xi}(\alpha, t) := u_{m,\xi}(\alpha - (v_m + \phi_m(\xi))t)
\]
is a travelling wave moving at the velocity $v_m + \phi_m(\xi)$.
Moreover for any $c > 0$ there is an open interval $I$ containing 0 such that for all $\xi \in I$, the travelling waves $f_{m,\xi}$ are analytic in the strip $\{|\Im\alpha| < c\}$. This proves Theorem \ref{thmtravelling}.

\section*{Acknowledgements}
 This work is supported in part by the Spanish Ministry of Economy under the ICMAT Severo Ochoa grant SEV2015-0554 and MTM2017-89976-P. AC was partially supported by the Europa Excelencia program ERC2018-092824. DC and FZ were partially supported by the ERC Advanced Grant 788250.

\bibliographystyle{abbrv}
\bibliography{references}






\begin{tabular}{l}
\textbf{Angel Castro} \\
 {\small Instituto de Ciencias Matematicas-CSIC-UAM-UC3M-UCM}\\
 {\small Consejo Superior de Investigaciones Cientificas} \\
 {\small C/ Nicolas Cabrera, 13-15, 28049 Madrid, Spain} \\
  {\small Email: angel\_castro@icmat.es} \\
\\
\textbf{Diego Cordoba} \\
  {\small Instituto de Ciencias Matematicas-CSIC-UAM-UC3M-UCM} \\
 {\small Consejo Superior de Investigaciones Cientificas} \\
 {\small C/ Nicolas Cabrera, 13-15, 28049 Madrid, Spain} \\
  {\small Email: dcg@icmat.es} \\
\\
\textbf{Fan Zheng} \\
 {\small Instituto de Ciencias Matematicas-CSIC-UAM-UC3M-UCM} \\
 {\small Consejo Superior de Investigaciones Cientificas} \\
 {\small C/ Nicolas Cabrera, 13-15, 28049 Madrid, Spain} \\
  {\small Email: fan.zheng@icmat.es} \\
  \\

\end{tabular}
\end{document}